\newcommand{\cmark}{\ding{51}}%
\newcommand{\xmark}{\ding{55}}%
\theoremstyle{plain}
\newtheorem{Theorem}{Theorem}[section]
\newtheorem{Proposition}[Theorem]{Proposition}
\newtheorem{Lemma}[Theorem]{Lemma}
\theoremstyle{definition}
\newtheorem{Definition}[Theorem]{Definition}
\newtheorem{Fact}[Theorem]{Fact}
\newtheorem{Corollary}[Theorem]{Corollary}
\begin{document}
\sloppy

\title{Split Interpolation\\
\small Refining Craig's Theorem via Three-Valued Logics}

\author{
\name{Quentin Blomet\textsuperscript{a,b}\thanks{CONTACT Quentin Blomet. Email: quentin.blomet@ens.psl.eu}}
\affil{\textsuperscript{a}Institut Jean-Nicod (CNRS, ENS-PSL, EHESS), PSL University, 29 rue d'Ulm, Paris, 75005, France; 
\textsuperscript{b}Department of Philosophy, University of Greifswald
Baderstra{\ss}e 6, Greifswald, 17489, Germany}
}

\maketitle

\begin{abstract}
Which choices of truth tables and consequence relations for two logics $\mathsf{L}_1$ and $\mathsf{L}_2$ ensure the satisfaction of the following \textit{split interpolation} property: If two formulas $\phi$ and $\psi$ share at least one propositional atom and $\phi$ classically entails $\psi$, then there is a formula $\chi$ that shares all its propositional atoms with both $\phi$ and $\psi$, such that $\phi$ entails $\chi$ in $\mathsf{L}_1$ and $\chi$ entails $\psi$ in $\mathsf{L}_2$? We identify the cases in which this property holds for any pair of propositional logics based on the same three-valued Boolean normal monotonic scheme for connectives and two monotonic consequence relations. Since the resulting logics are subclassical, every instance of this property constitutes a particular refinement of Craig's deductive interpolation theorem, as it entails the latter and further restricts the range of possible interpolants.
\end{abstract}

\begin{keywords}
Three-valued logics; Interpolation; Kleene logics; Non-classical logics; Strict-tolerant logic; Logic of paradox; Strong Kleene logic; Weak Kleene logic
\end{keywords}

\section{Introduction}
\cite{Craig1957}'s deductive interpolation theorem is a foundational result of classical logic that can be formulated along these lines:

\vspace{7pt}
Given any two formulas $\phi$ and $\psi$ built from the classical propositional language, if $\phi$ and $\psi$ share some atom and $\phi$ classically entails $\psi$, then there is a formula $\chi$ such that each of its atoms is in $\phi$ and $\psi$, and $\phi$ classically entails $\chi$ and $\chi$ classically entails $\psi$.
\vspace{7pt}

\noindent
Since 1957, several interpolation results have been established for other logics: by \cite{Schutte1962} for first-order intuitionistic logic, by \cite{Maksimova1977} for superintuitionistic logics, or by \cite{NolaLettieri2000} for axiomatic extensions of {\L}ukasiewicz logic, just to name a few. Interpolation results have been applied in a wide range of domains, from the study of explanation \citep{Hintikka1999} to SAT-based model checking \citep{McMillan2003}.

In recent articles, \cite{milne2016refinement} and \cite{Prenosil2017} presented a refined version of the deductive interpolation theorem, resorting to the three-valued logics $\mathsf{K}_3$ and $\mathsf{LP}$. The Strong Kleene logic $\mathsf{K}_3$ is a paracomplete logic developed by \cite{Kleene1952-KLEITM} to account for partial functions, while \cite{Asenjo1966} and \cite{Priest1979}'s Logic of Paradox $\mathsf{LP}$ is a paraconsistent logic developed to account for paradoxical statements. Milne's \textit{split interpolation} theorem goes as follows:

\vspace{7pt}
For any two formulas $\phi$ and $\psi$, if $\phi$ is satisfiable, $\psi$ falsifiable, and $\phi$ classically entails $\psi$, then there is a formula $\chi$ such that each of its atoms is in $\phi$ and $\psi$, and $\phi$ entails $\chi$ in $\mathsf{K}_3$ and $\chi$ entails $\psi$ in $\mathsf{LP}$.\footnote{
Note the difference in the antecedent of the conditional in Craig's interpolation theorem compared to Milne's. While Craig's theorem requires that $\phi$ and $\psi$ share an atom, Milne's requires that $\phi$ is satisfiable and $\psi$ falsifiable. The former condition is strictly weaker than the latter in the absence of constants in the language. If $\phi$ is satisfiable, $\psi$ falsifiable, and $\phi$ classically entails $\psi$, then $\phi$ and $\psi$ must necessarily share some atom. For the remainder of this paper, we will focus exclusively on the former condition, as any interpolation result stated with this condition is strictly stronger than those stated with the latter.}
\vspace{7pt}

\noindent
The refinement of Craig's deductive interpolation theorem is evident from the fact that classical logic extends both $\mathsf{K}_3$ and $\mathsf{LP}$. Specifically, if $\phi$ entails $\psi$ in $\mathsf{K}_3$ or $\mathsf{LP}$, then $\phi$ entails $\psi$ in classical logic. Therefore, Craig's theorem directly follows from Milne's refinement. 

The operations of both $\mathsf{K}_3$ and $\mathsf{LP}$ are defined over the set $ \lbrace 0, \sfrac{1}{2}, 1 \rbrace$ using the \textit{Strong Kleene} truth tables. Negation is characterized as an order-reversing, involutive unary operation, while disjunction and conjunction are binary operations corresponding to the functions $\mathit{max}$ and $\mathit{min}$ over the total order $0 < \sfrac{1}{2} < 1$. The distinction between $\mathsf{K}_3$ and $\mathsf{LP}$ lies in the preservation of different designated values from premises to conclusions in valid inferences. In $\mathsf{LP}$, if the premises take values in the set $\mathbf{t} = \lbrace \sfrac{1}{2}, 1 \rbrace$, then the conclusion does as well. In contrast, $\mathsf{K}_3$ preserves the truth values within the set $\mathbf{s} =  \lbrace 1 \rbrace$. The Strong Kleene truth tables belong to a class of truth tables referred to as \textit{Boolean normal monotonic schemes} by \cite{da2023three}. Informally, a scheme for the connectives is \textit{Boolean normal} if all its operations coincide with the classical two-valued ones when applied to classical values. A scheme is \textit{monotonic} if all its operations are monotonic with respect to the information order $\le_{I}$, where $\sfrac{1}{2} <_I 1$ and $\sfrac{1}{2} <_I 0$. Alongside the two standards of evaluation $\mathbf{ss}$ (which ensures the preservation of $1$ from premises to conclusion) and $\mathbf{tt}$ (which ensures the preservation of both $1$ and $\sfrac{1}{2}$ from premises to conclusion), Da R{\'e} et al.\ examine the standards $\mathbf{st}$, $\mathbf{ts}$, and $\mathbf{ss} \cap \mathbf{tt}$. The $\mathbf{st}$ standard is defined so that if the premises take a value in $\mathbf{s}$, the conclusion must take a value in $\mathbf{t}$, and conversely for $\mathbf{ts}$. The $\mathbf{ss} \cap \mathbf{tt}$ standard, on the other hand, requires that an argument be valid under both the $\mathbf{ss}$ and $\mathbf{tt}$ conditions. All five standards correspond to the \textit{intersective mixed} consequence relations over the set $\lbrace 0, \sfrac{1}{2}, 1 \rbrace$, which naturally generate the class of three-valued \textit{monotonic} consequence relations (see \citealp{Chemlaetal2017}).\footnote{This notion of monotonicity should not be confused with the one used to characterize schemes. It describes consequence relations such that for all set of formulas $\Gamma, \Delta$, if $\Gamma$ entails $\Delta$, then $\Gamma \cup \Gamma'$ entails $\Delta \cup \Delta'$, for any set of formulas $\Gamma', \Delta'$.} Da  R{\'e} et al.\ show in particular that when combined with a Boolean normal monotonic scheme, the $\mathbf{st}$ standard yields a logic in which the valid inferences align precisely with those of classical logic. 

These results provide an opportunity to extend Milne's interpolation theorem and address an open problem posed by P\v{r}enosil, which seeks additional natural examples of the split interpolation property.\footnote{\url{https://sites.google.com/site/adamprenosil/problems}} To achieve this, we will explore the following question: Which choice of three-valued logics $\mathsf{L}_1$ and $\mathsf{L}_2$ based on a Boolean normal monotonic scheme and a monotonic consequence relation guarantees that the following property holds true:

\vspace{7pt}
If two formulas $\phi$ and $\psi$ share some atom, and $\phi$ classically entails $\psi$, then there is a formula $\chi$ such that each of its atoms is in $\phi$ and $\psi$, and $\phi$ entails $\chi$ in the logic $\mathsf{L}_1$ and $\chi$ entails $\psi$ in the logic $\mathsf{L}_2$.
\vspace{7pt}

Out of the 400 possible combinations of the 25 couples of standards with the 16 Boolean normal monotonic schemes, we will see that 40 satisfy the split interpolation property, with 8 of these offering stronger refinements of Craig's interpolation theorem than the others.

To prove our claim, we start by defining the key notions used throughout the paper in Section \ref{sec:def}, in particular the notions of monotonicity and Boolean normality. In Section \ref{sec:nri}, we answer our main question for pairs of logics for which the property fails, independently of the choice of connective scheme. Section \ref{sec:rdc} is dedicated to the results dependent on a choice of scheme. It is divided into two parts, the first is dedicated to the positive results and the second to the negative ones. We finally conclude by discussing our results in Section \ref{sec:dis} and comparing them to those of Milne and P\v{r}enosil.

\section{Preliminary definitions}\label{sec:def}
In the following section, we define the terminology used throughout the paper and present the semantics of each logic discussed. The semantics will be presented through two key components: a (mixed) consequence relation that establishes the criteria for evaluating premises and conclusions, and a set of truth tables for the connectives, referred to as a \textit{scheme} for the logic \citep{da2023three}.

Consider a language composed of an enumerable set $\mathit{Var}$ of atomic formulas $p_1, \dots, p_n$ and a collection $\lbrace \neg, \wedge, \lor \rbrace$ of logical constants. The set of formulas $\mathcal{L}$ is recursively defined from the language in the usual way. Elements of $\mathcal{L}$ will be denoted by lower case Greek letters. Subsets of $\mathcal{L}$ will be denoted by uppercase Greek letters. An inference on $\mathcal{L}$ is an ordered pair $\langle \Gamma, \Delta \rangle$ (hereafter symbolized by $\Gamma \Rightarrow \Delta$). Given a formula $\phi$, the set of atoms from which it is constructed is denoted by $\mathit{At}(\phi)$.

As mentioned above, the semantics of the logics is presented through two key components: a consequence relation and a scheme for the connectives. Both notions rely on the definition of a truth value assignment, that is, a function $w: \textit{Var} \longrightarrow \mathcal{V} = \lbrace 0, \sfrac{1}{2}, 1\rbrace$. A truth value assignment is extended to a valuation $v: \mathcal{L} \longrightarrow \mathcal{V}$ through a three-valued \textit{scheme} for the connectives. A scheme is a triple $\langle f^{\neg}, f^{\wedge}, f^{\lor} \rangle$ of operations defined over $\mathcal{V}$. Given a scheme $\mathbf{X}$, a truth value assignment $w$ is extended to an $\mathbf{X}$-valuation $v$ by letting:
\begin{align*}
v(p) &= w(p), \ {\text{if} \ p \in \textit{Var},}\\
v(\neg \phi) &= f^{\neg}v(\phi),\\
v(\phi \wedge \chi) &= f^{\wedge}(v(\phi), v(\chi)),\\
v( \phi \lor \chi) &= f^{\lor}(v(\phi), v(\chi)).\\
\end{align*}
In this paper, we will consider the three-valued schemes satisfying the properties of \textit{Boolean normality} and \textit{monotonicity}.

\begin{Definition}[\citealp{da2023three}]
\label{def:boolean-normality}
An $n$-ary operation $f^\star$ is \textit{Boolean normal} if for $\{a_1,...,a_n\} \subseteq \{0,1\}$, $f^\star(a_1,...,a_n)=f^\star_{\mathsf{CL}}(a_1,...,a_n)$, where $f^\star_{\mathsf{CL}}$ is the corresponding operation over the usual two-element Boolean algebra. 
A scheme is \textit{Boolean normal} if and only if each of its operations is.
\end{Definition}

\noindent
In other words, a scheme is Boolean normal if each of its operations agrees on the classical values with the classical two-valued operations. 

The notion of monotonicity is defined from the information order over $\mathcal{V}$, where it is assumed that the value $\sfrac{1}{2}$ is less informative than $1$ and $0$. If a scheme is monotonic, each of its operations is order-preserving with respect to the informational order.

\begin{Definition}[\citealp{da2023three}]
\label{def:monotonicity}
Given the order $\le_{I}$ defined over $\mathcal{V}$ so that $\sfrac{1}{2} <_{I} 0$ and $\sfrac{1}{2} <_{I} 1$, a scheme is said to be \textit{monotonic} if the following holds for each of its operations $f^\star$ of arity $n$:
$$\textup{If} \ x_i \le_I y_i \ \textup{for all} \ 1 \le i \le n, \ \textup{then} \ f^\star(x_1, \ldots, x_n) \le_{I} f^\star(y_1, \ldots, y_n).$$
\end{Definition}

\noindent
As a consequence, for any formula $\phi$, and two valuations $v$ and $v'$, if $v'$ is more informative than $v$ with respect to the atoms of $\phi$, then $v'$ is more informative than $v$ regarding $\phi$. This is illustrated in the next corollary.

\begin{Corollary}\label{cor:mono}
    Let $\mathbf{X}$ be a monotonic scheme with operations $f^{\star}_1 \ldots f^{\star}_n$ and $v, v'$ be two $\mathbf{X}$-valuations. For all $\phi \in \mathcal{L}$, it holds that if $v(p) \le_I v'(p)$ for all $p \in \mathit{At}(\phi)$, then $v(\phi) \le_I v'(\phi)$.
\end{Corollary}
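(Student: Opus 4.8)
The plan is to prove the statement by structural induction on $\phi$, with the monotonicity condition of Definition \ref{def:monotonicity} doing the work at each connective step. Fix the two $\mathbf{X}$-valuations $v$ and $v'$ throughout; the induction hypothesis is precisely the displayed implication, taken to hold for all immediate subformulas of $\phi$.

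For the base case, suppose $\phi = p$ for some $p \in \mathit{Var}$. Then $\mathit{At}(\phi) = \{p\}$, so the antecedent $v(p) \le_I v'(p)$ is literally the consequent $v(\phi) \le_I v'(\phi)$, since $\phi = p$. For the inductive step there are three cases, one per connective. If $\phi = \neg \psi$, then $\mathit{At}(\phi) = \mathit{At}(\psi)$, so the hypothesis of the statement transfers unchanged to $\psi$; the induction hypothesis gives $v(\psi) \le_I v'(\psi)$, and applying the monotonicity of the unary operation $f^{\neg}$ yields $v(\neg \psi) = f^{\neg} v(\psi) \le_I f^{\neg} v'(\psi) = v'(\neg \psi)$. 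If $\phi = \psi_1 \wedge \psi_2$, then $\mathit{At}(\phi) = \mathit{At}(\psi_1) \cup \mathit{At}(\psi_2)$, so in particular $v(p) \le_I v'(p)$ holds for every atom of $\psi_1$ and for every atom of $\psi_2$; the induction hypothesis applied to each conjunct gives $v(\psi_1) \le_I v'(\psi_1)$ and $v(\psi_2) \le_I v'(\psi_2)$, and the monotonicity of the binary operation $f^{\wedge}$ then delivers $v(\phi) = f^{\wedge}(v(\psi_1), v(\psi_2)) \le_I f^{\wedge}(v'(\psi_1), v'(\psi_2)) = v'(\phi)$. The case $\phi = \psi_1 \lor \psi_2$ is identical, with $f^{\lor}$ in place of $f^{\wedge}$.

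There is no substantial obstacle here: the only point deserving attention is the routine bookkeeping that the atoms of each immediate subformula are among the atoms of $\phi$, which is what lets the hypothesis of the statement descend to the subformulas so that the induction hypothesis becomes applicable. Since the language has exactly the connectives $\neg, \wedge, \lor$, these are all the cases, and the induction is complete.
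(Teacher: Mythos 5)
Your proof is correct and follows essentially the same route as the paper's: structural induction on $\phi$, using the inclusion $\mathit{At}(\psi_i) \subseteq \mathit{At}(\phi)$ to pass the hypothesis to the immediate subformulas and then applying the monotonicity of the relevant operation. The only cosmetic difference is that you treat the three connectives as separate cases, whereas the paper handles a generic $n$-ary operation $f^{\star}_i$ in one stroke.
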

\begin{proof}
    We proceed by induction on the complexity of $\phi$.

    \medskip
    
    \underline{Base case:} Straightforward.

    \medskip
    
    \underline{Inductive step:} Assume $v(p) \le_I v'(p)$ for all $p \in \mathit{At}(\phi)$ and $\phi = \star (\psi_1, \ldots, \psi_n)$, with $\star$ the connective corresponding to the operation $f^{\star}_i$. By induction hypothesis, $v(\psi_i) \le_I v'(\psi_i)$, since $\mathit{At}(\psi_i) \subseteq \mathit{At}(\phi)$ for all $i \le n$. Given that $\mathbf{X}$ is monotonic, $v(\phi) = v(\star (\psi_1, \ldots, \psi_n)) = f^{\star}_i (v(\psi_1), \ldots, v(\psi_n)) \le_I f^{\star}_i (v^*(\psi_1), \ldots, v^*(\psi_n)) = v^*(\star (\psi_1, \ldots, \psi_n)) = v^*(\phi)$.
\end{proof}

\begin{figure}[b]
\centering
\begin{minipage}{.20\textwidth}
\begin{displaymath}
\begin{array}{c| c}
 & f^{\neg}\\
\hline
1 & 0 \\
\sfrac{1}{2} & \sfrac{1}{2} \\
0 & 1 \\
\end{array}
\end{displaymath}
\end{minipage}
\begin{minipage}{.30\textwidth}
\begin{displaymath}
\begin{array}{c| c c c}
f^{\wedge} & 1 & \sfrac{1}{2} & 0\\
\hline
1 & 1 & \sfrac{1}{2} & 0\\
\sfrac{1}{2} & \sfrac{1}{2} & \sfrac{1}{2} & 0, \sfrac{1}{2}\\
0 & 0 & 0, \sfrac{1}{2} & 0\\
\end{array}
\end{displaymath}
\end{minipage}
\begin{minipage}{.37\textwidth}
 \begin{displaymath}
\begin{array}{c| c c c}
f^{\lor} & 1 & \sfrac{1}{2} & 0\\
\hline
1 & 1 &\sfrac{1}{2}, 1  & 1\\
\sfrac{1}{2} & \sfrac{1}{2}, 1 & \sfrac{1}{2} & \sfrac{1}{2}\\
0 & 1 & \sfrac{1}{2} & 0\\
\end{array}
\end{displaymath}
\end{minipage}
\caption{All the Boolean normal monotonic schemes}
\label{fig:trt}
\end{figure}

By combining the notions of Boolean normality and monotonicity, we obtain the truth tables depicted in Figure \ref{fig:trt}. A comma between two values indicates a possible choice for the truth table of the corresponding operation. To fully define the truth table for a binary connective, one must therefore specify the outputs for the two unsettled cases. For example, in the Strong Kleene scheme, the conjunction is defined so that $f^{\wedge}(0, \sfrac{1}{2}) = 0$, whereas in the Weak Kleene scheme, the conjunction is defined so that $f^{\wedge}(0, \sfrac{1}{2}) = \sfrac{1}{2}$. In total, there are 16 distinct Boolean normal monotonic schemes. Each scheme will be denoted by a pair $\mathbf{X}_\wedge/\mathbf{Y}_\lor$ with $\mathbf{X}_\wedge$ referring to a possible choice of truth values for the two unsettled cases of $f^\wedge$, and $\mathbf{Y}_\lor$ to a possible choice of truth values for the two unsettled cases of $f^\lor$. The name of a particular scheme is obtained by pairing an $\mathbf{X}_\wedge$ from Figure \ref{fig:tvwedge} with a $\mathbf{Y}_\lor$ from Figure \ref{fig:tvlor}. For simplicity, operations are identified with their corresponding symbols in the language.

The Strong Kleene scheme is denoted $\mathbf{SK}_\wedge/\mathbf{SK}_\lor$, this scheme can be alternatively defined by ordering the values of $\mathcal{V}$ as $0 < \sfrac{1}{2} < 1$ and letting $f^{\neg}(x) = 1 - x$, $f^{\wedge}(x, y)= min(x, y)$, and $f^{\lor}(x, y) = max(x, y)$. The Weak Kleene scheme is denoted $\mathbf{WK}_\wedge/\mathbf{WK}_\lor$. In this scheme, the intermediate value is infectious, in the sense that any operation using $\sfrac{1}{2}$ as an input yields $\sfrac{1}{2}$ as an output. The Left Middle Kleene Scheme $\mathbf{LMK}_\wedge/\mathbf{LMK}_\lor$ can be viewed as a middle ground between the two previous schemes. Its truth tables are asymmetric: the intermediate value is infectious on the left of the binary connectives, but not on the right. Dual to this scheme is the Right Middle Kleene scheme, $\mathbf{RMK}_\wedge/\mathbf{RMK}_\lor$, where the binary operations are infectious on the right. Ultimately, all other Boolean normal monotonic schemes can be obtained by combining the truth tables of conjunction and disjunction of the aforementioned schemes.

\begin{figure}
\begin{subfigure}
    \centering
\begin{parcolumns}{4}
\colchunk[1]{$\mathbf{SK}_\wedge$
        \begin{enumerate}[align=left,wide, labelindent=2pt, itemsep=2pt, topsep=2pt]
        \item[] $0 \wedge \sfrac{1}{2} = 0$
        \item[] $\sfrac{1}{2} \wedge 0 = 0$
        \end{enumerate}}

\colchunk[2]{$\mathbf{WK}_\wedge$
    \begin{enumerate}[align=left,wide, labelindent=2pt, itemsep=2pt, topsep=2pt]
        \item[] $0 \wedge \sfrac{1}{2} = \sfrac{1}{2}$
        \item[] $\sfrac{1}{2} \wedge 0 = \sfrac{1}{2}$
    \end{enumerate}}
    
\colchunk[3]{$\mathbf{LMK}_\wedge$
        \begin{enumerate}[align=left,wide, labelindent=2pt, itemsep=2pt, topsep=2pt]
        \item[] $0 \wedge \sfrac{1}{2} = 0$
        \item[] $\sfrac{1}{2} \wedge 0 = \sfrac{1}{2}$
        \end{enumerate}}
    
\colchunk[4]{$\mathbf{RMK}_\wedge$
        \begin{enumerate}[align=left,wide, labelindent=2pt, itemsep=2pt, topsep=2pt]
        \item[] $0 \wedge \sfrac{1}{2} = \sfrac{1}{2}$
        \item[] $\sfrac{1}{2} \wedge 0 = 0$
        \end{enumerate}}

\end{parcolumns}
\caption{Possible choices of truth values for the two unsettled cases of $\wedge$}
\label{fig:tvwedge}
\end{subfigure}

\begin{subfigure}
    \centering
\begin{parcolumns}{4}
\colchunk[1]{$\mathbf{SK}_\lor$
        \begin{enumerate}[align=left,wide, labelindent=2pt, topsep=2pt]
        \item[] $1 \lor \sfrac{1}{2} = 1$
        \item[] $\sfrac{1}{2} \lor 1 = 1$
        \end{enumerate}}
        
\colchunk[2]{$\mathbf{WK}_\lor$
    \begin{enumerate}[align=left,wide, labelindent=2pt, itemsep=2pt, topsep=2pt]
        \item[] $1 \lor \sfrac{1}{2} = \sfrac{1}{2}$
        \item[] $\sfrac{1}{2} \lor 1 = \sfrac{1}{2}$
    \end{enumerate}}

\colchunk[3]{$\mathbf{LMK}_\lor$
    \begin{enumerate}[align=left,wide, labelindent=2pt, itemsep=2pt, topsep=2pt]
        \item[] $1 \lor \sfrac{1}{2} = 1$
        \item[] $\sfrac{1}{2} \lor 1 = \sfrac{1}{2}$
    \end{enumerate}}
    
\colchunk[4]{$\mathbf{RMK}_\lor$
    \begin{enumerate}[align=left,wide, labelindent=2pt, itemsep=2pt, topsep=2pt]
        \item[] $1 \lor \sfrac{1}{2} = \sfrac{1}{2}$
        \item[] $\sfrac{1}{2} \lor 1 = 1$
    \end{enumerate}}
\end{parcolumns}
\caption{Possible choices of truth values for the two unsettled cases of $\lor$}
\label{fig:tvlor}
\end{subfigure}
\end{figure}

Let us now consider the second component of the semantics of a logic. Given a scheme $\mathbf{X}$ and a standard $\mathbf{xy}$ with $\mathbf{x}, \mathbf{y} \subseteq \lbrace 0, \sfrac{1}{2}, 1\rbrace$, the canonical notion of logical consequence associated with this standard is defined as follows: 

\begin{Definition}[$\mathbf{xy}$-satisfaction, $\mathbf{xy}$-validity]
A $\mathbf{X}$-valuation $v$ satisfies an inference $\Gamma \Rightarrow \Delta$ with respect to a standard $\mathbf{xy}$ (symbolized by $v \models_{\mathbf{X}}^{\mathbf{xy}} \Gamma \Rightarrow \Delta$) if and only if $v(\gamma)\in \mathbf{x}$ for all $\gamma \in \Gamma$ only if $v(\delta) \in \mathbf{y}$ for some $\delta \in \Delta$. An inference $ \Gamma \Rightarrow \Delta$ is valid in a logic $\mathsf{L}$ defined by a scheme $\mathbf{X}$ and a $\mathbf{xy}$ consequence relation (symbolized by $\models_{\mathbf{X}}^{\mathbf{xy}} \Gamma \Rightarrow \Delta$) if and only if $v \models_{\mathbf{X}}^{\mathbf{xy}} \Gamma \Rightarrow \Delta$ for all $\mathbf{X}$-valuations $v$.
\end{Definition}
\noindent
Consequence relations will at times be identified with the set of inferences they validate. For example, $\models_{\mathbf{X}}^{\mathbf{xy}} \subseteq \models_{\mathbf{X}'}^{\mathbf{x'y'}}$ is used to indicate that any inference valid under the first consequence relation is also valid under the second. Given the Boolean normality of the schemes discussed here, the classical two-valued logical consequence (denoted by $\models_{\mathsf{CL}}$) can be defined from any scheme by restricting valuations to the set $\lbrace 1, 0 \rbrace$ and using the standards $\mathbf{x} = \mathbf{y} = \lbrace 1 \rbrace$. These restricted valuations will henceforth be referred to as \textit{bivaluations}, in contrast to the unrestricted \textit{trivaluations}.

As noted in the previous section, this paper will focus exclusively on the \textit{intersective mixed} consequence relations \citep{Chemlaetal2017}. There are five such relations, all defined from the sets $\mathbf{s} = \lbrace 1 \rbrace$ (strict) and $\mathbf{t} = \lbrace 1, \sfrac{1}{2} \rbrace$ (tolerant). They naturally correspond to the class of three-valued monotonic consequence relations, meaning that they are the only relations $\models_{\mathbf{X}}^{\mathbf{xy}}$ that satisfy the monotonicity condition: if $\models_{\mathbf{X}}^{\mathbf{xy}} \Gamma \Rightarrow \Delta$, then $\models_{\mathbf{X}}^{\mathbf{xy}} \Gamma, \Gamma' \Rightarrow \Delta, \Delta'$. The first four \textit{intersective mixed} consequence relations are obtained by combining these two sets in every possible way, resulting in the pairs $\mathbf{ss}$, $\mathbf{tt}$, $\mathbf{st}$, and $\mathbf{ts}$. The pairs $\mathbf{ss}$ and $\mathbf{tt}$ define the pure consequence relations, which preserve a fixed set of designated values from premises to conclusions. Standards $\mathbf{st}$ and $\mathbf{ts}$ impose different constraints: $\mathbf{st}$ ensures that it is impossible for the premises to be true and the conclusions to be false, while $\mathbf{ts}$ ensures that the conclusions cannot be non-true if the premises are non-false. The last intersective mixed consequence relation is defined by the standard $\mathbf{ss} \cap \mathbf{tt}$, the intersection of $\mathbf{ss}$ and $\mathbf{tt}$. The notion of validity associated with the latter consequence relation is as follows:

\begin{Definition}[$\mathbf{ss} \cap \mathbf{tt}$-validity]
  Given a scheme $\mathbf{X}$ and the $\mathbf{ss}$ and $\mathbf{tt}$ standards, 
$\models_{\mathbf{X}}^{\mathbf{ss} \cap \mathbf{tt}} \Gamma \Rightarrow \Delta$ if and only if $\models_{\mathbf{X}}^{\mathbf{ss}} \Gamma \Rightarrow \Delta$ and $\models_{\mathbf{X}}^{\mathbf{tt}} \Gamma \Rightarrow \Delta$.
\end{Definition}

The consequence relation defined from the intersection of $\mathbf{ss}$ and $\mathbf{tt}$ is also known in the literature as the \textit{order-theoretic} consequence relation \citep{Field2008, Chemlaetal2017}. The notion of validity associated with this consequence relation is provably equivalent to the following condition: Given the total order $0 < \sfrac{1}{2} < 1$, the value of the conclusions cannot be lower than the value of the premises. 

By associating the intersective mixed consequence relations with the Boolean normal monotonic schemes, one obtains well-known three-valued logics. Combining the standards $\mathbf{ss}$, $\mathbf{tt}$, $\mathbf{st}$, $\mathbf{ts}$ and $\mathbf{ss} \cap \mathbf{tt}$ with the Strong Kleene scheme $\mathbf{SK}_\wedge/\mathbf{SK}_\lor$ produces, respectively, Kleene's logic $\mathsf{K}_3$ \citep{Kleene1952-KLEITM}, Asenjo and Priest's $\mathsf{LP}$ \citep{Asenjo1966,Priest1979}, the strict-tolerant logic $\mathsf{ST}$ \citep{Girard1987, Frankowski2004, Cobrerosetal2012}, the tolerant-strict logic $\mathsf{TS}$ \citep{Malinowski1990} and the logic dubbed \textit{Kalman implication} by \cite{Makinson1973}.

Combining the $\mathbf{ss}$ standard with the Left Middle Kleene scheme yields a logic often referred to as \textit{Middle Kleene} \citep{Peters1979, BeaverKrahmer2001, George2014}, and used to account for presupposition projections. Such a logic has the particularity of having a non-commutative disjunction.\footnote{To see this, note that $v \not\models_{\mathbf{LMK}_{\wedge}/\mathbf{LMK}_{\lor}}^{\mathbf{ss}} p \lor q \Rightarrow q \lor p$ for some $v$ such that $v(p)=1$ and $v(q)= \sfrac{1}{2}$. By a similar argument, it can be shown that combining this scheme with a $\mathbf{tt}$ consequence relation results in a logic with a non-commutative conjunction.}

Combining the standards $\mathbf{ss}$ and $\mathbf{tt}$ with the Weak Kleene scheme respectively produces \cite{Bochvar1938}'s logic $\mathbf{K}_3^w$ and \cite{Hallden1949}'s Paraconsistent Weak Kleene logic $\mathbf{PWK}$. The first has the specificity of invalidating disjunction introduction $\phi \Rightarrow \phi \lor \psi$ and the second conjunction elimination $\phi \wedge \psi \Rightarrow \phi$.\footnote{If $v(p)=1$ and $v(q)=\sfrac{1}{2}$, $v \not\models_{\mathbf{WK}_{\wedge}/\mathbf{WK}_{\lor}}^{\mathbf{ss}} p \Rightarrow p \lor q$. If $v(p)=0$ and $v(q)=\sfrac{1}{2}$, then $v \not\models_{\mathbf{WK}_{\wedge}/\mathbf{WK}_{\lor}}^{\mathbf{tt}} p \wedge q \Rightarrow p$.}

A multitude of other logics, which to the best of our knowledge have not yet been studied, can be defined by combining an intersective mixed consequence relation with a Boolean normal monotonic scheme, particularly schemes of the form $\mathbf{X}_{\wedge}/\mathbf{Y}_{\lor}$ where $\mathbf{X} \neq \mathbf{Y}$. We will defer the discussion of these schemes to a later occasion and now turn our attention to the split interpolation property.

\section{Results independent of a choice of scheme}\label{sec:nri}
The main question we aim to address is the following: for which pair of standards $\mathbf{xy}$ and $\mathbf{x}'\mathbf{y}'$, and Boolean normal monotonic scheme $\mathbf{X}$, does the following property hold?

\vspace{7pt}
If $\mathit{At}(\phi) \cap \mathit{At}(\psi) \neq \emptyset$ and $\models_{\mathsf{CL}} \phi \Rightarrow \psi$, then there is a formula $\chi$ such that each of its atoms is in $\phi$ and $\psi$, and $\models_{\mathbf{X}}^{\mathbf{xy}} \phi \Rightarrow \chi$ and $\models_{\mathbf{X}}^{\mathbf{x}'\mathbf{y}'} \chi \Rightarrow \psi$.
\vspace{7pt}

We will examine first the pairs of logics for which the success or failure of the property can be attributed solely to the choice of pair of standards. We begin by revisiting a significant theorem proved by \cite{da2023three}, which demonstrates that any logic with a $\mathbf{st}$ consequence relation, based on a Boolean normal monotonic scheme, validates the same inferences as classical logic.

\begin{Theorem}[\citealp{da2023three}]\label{thm:dare}
    For every Boolean normal monotonic scheme $\mathbf{X}$, $\models_{\mathbf{X}}^{\mathbf{st}} \ = \ \models_{\mathsf{CL}}$.
\end{Theorem}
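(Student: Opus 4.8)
The plan is to show the two inclusions $\models_{\mathbf{X}}^{\mathbf{st}} \subseteq \models_{\mathsf{CL}}$ and $\models_{\mathsf{CL}} \subseteq \models_{\mathbf{X}}^{\mathbf{st}}$ separately. The first inclusion is the easy direction: if $\Gamma \Rightarrow \Delta$ is $\mathbf{st}$-valid, then in particular every \emph{bivaluation} (a valuation taking only classical values $0,1$) satisfies it; but on classical values the $\mathbf{s}$ and $\mathbf{t}$ standards both reduce to ``value $=1$'' (since $\sfrac{1}{2}$ is never taken), and by Boolean normality such a bivaluation computes exactly the classical truth value of each formula. Hence $\Gamma \Rightarrow \Delta$ is classically valid.

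For the converse inclusion, I would argue contrapositively: suppose $\not\models_{\mathbf{X}}^{\mathbf{st}} \Gamma \Rightarrow \Delta$, so there is an $\mathbf{X}$-valuation $v$ with $v(\gamma) \in \mathbf{s} = \{1\}$ for all $\gamma \in \Gamma$ and $v(\delta) \notin \mathbf{t} = \{1,\sfrac{1}{2}\}$, i.e.\ $v(\delta) = 0$, for all $\delta \in \Delta$. The goal is to produce a \emph{bivaluation} $v'$ witnessing $\not\models_{\mathsf{CL}} \Gamma \Rightarrow \Delta$, i.e.\ with $v'(\gamma) = 1$ for all $\gamma \in \Gamma$ and $v'(\delta) = 0$ for all $\delta \in \Delta$. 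The natural move is to define $v'$ on atoms by pushing each value of $v$ up to a classical value along the information order $\le_I$: set $v'(p) = 1$ if $v(p) = 1$, $v'(p) = 0$ if $v(p) = 0$, and for $v(p) = \sfrac{1}{2}$ choose \emph{either} classical value (say $1$, or arbitrarily). Then $v(p) \le_I v'(p)$ for every atom $p$, so by Corollary~\ref{cor:mono}, $v(\phi) \le_I v'(\phi)$ for every formula $\phi$. Since $v(\gamma) = 1$ is $\le_I$-maximal, $v'(\gamma) = 1$; and since $v(\delta) = 0$ is $\le_I$-maximal, $v'(\delta) = 0$. As $v'$ ranges only over classical values, Boolean normality ensures it computes classical truth values, so $v'$ refutes $\Gamma \Rightarrow \Delta$ classically.

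The key observation making the second direction work is that the ``bad'' values for $\mathbf{st}$-refutation — namely $1$ on the premise side and $0$ on the conclusion side — are precisely the two $\le_I$-maximal (classical) values, so monotonicity lets us lift an arbitrary refuting trivaluation to a refuting bivaluation without disturbing the relevant constraints. I expect the main (and essentially only) subtlety to be checking that Corollary~\ref{cor:mono} applies uniformly and that the choice of classical value for atoms mapped to $\sfrac{1}{2}$ is genuinely harmless on both sides of the inference; everything else is bookkeeping. Note this argument is specific to $\mathbf{st}$: it would fail for, e.g., $\mathbf{ts}$, where the refuting constraints ($\sfrac{1}{2}$-or-$1$ on premises, $0$-or-$\sfrac{1}{2}$ on conclusions) are not $\le_I$-maximal and cannot be pushed to classical values.
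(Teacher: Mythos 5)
Your proof is correct; note that the paper itself cites this theorem from Da R\'e et al.\ without reproducing a proof, so there is no in-paper argument to compare against, but yours is the standard one and relies only on machinery the paper already provides: Boolean normality to identify bivaluations with classical valuations (so that $\mathbf{s}$ and $\mathbf{t}$ coincide on them, giving $\models_{\mathbf{X}}^{\mathbf{st}} \subseteq \models_{\mathsf{CL}}$), and Corollary \ref{cor:mono} to sharpen a refuting trivaluation to a refuting bivaluation for the converse, which works precisely because the refuting values---$1$ on the premise side and $0$ on the conclusion side---are the two $\le_I$-maximal ones and are therefore preserved under sharpening. Your closing observation that the same lifting fails for $\mathbf{ts}$ correctly identifies why the result is special to the strict-tolerant standard.
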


\noindent
This result enables us to work within a three-valued framework for the remainder of the paper, avoiding the need to reduce every trivaluation to a bivaluation. It also allows us to prove the first and only positive result independent of a choice of scheme:

\begin{Proposition}\label{prp:st}
    For every standard $\mathbf{xy}$, if $\mathbf{X}$ is a Boolean normal monotonic scheme, the following split interpolation property holds: 
    
\vspace{7pt}
If $\mathit{At}(\phi) \cap \mathit{At}(\psi) \neq \emptyset$ and $\models_{\mathbf{X}}^{\mathbf{st}} \phi \Rightarrow \psi$, then there is a formula $\chi$ such that each of its atoms is in $\phi$ and $\psi$, and $\models_{\mathbf{X}}^{\mathbf{st}} \phi \Rightarrow \chi$ and $\models_{\mathbf{X}}^{\mathbf{st}} \chi \Rightarrow \psi$.
\end{Proposition}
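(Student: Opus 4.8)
The plan is to reduce the statement to the classical Craig interpolation theorem by means of Theorem~\ref{thm:dare}. Since the displayed property mentions only the $\mathbf{st}$ consequence relation, the quantification over $\mathbf{xy}$ is idle, and it suffices to treat the case in which $\mathsf{L}_1$ and $\mathsf{L}_2$ are both the logic given by the scheme $\mathbf{X}$ and the $\mathbf{st}$ standard.

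First I would invoke Theorem~\ref{thm:dare}: as $\mathbf{X}$ is a Boolean normal monotonic scheme, $\models_{\mathbf{X}}^{\mathbf{st}}$ coincides with $\models_{\mathsf{CL}}$. In particular the hypothesis $\models_{\mathbf{X}}^{\mathbf{st}} \phi \Rightarrow \psi$ becomes $\models_{\mathsf{CL}} \phi \Rightarrow \psi$.

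Next, using also the hypothesis $\mathit{At}(\phi) \cap \mathit{At}(\psi) \neq \emptyset$, I would apply Craig's deductive interpolation theorem as stated in the introduction --- whose antecedent is exactly that $\phi$ and $\psi$ share an atom and $\phi$ classically entails $\psi$ --- to obtain a formula $\chi$ with each of its atoms in both $\phi$ and $\psi$, such that $\models_{\mathsf{CL}} \phi \Rightarrow \chi$ and $\models_{\mathsf{CL}} \chi \Rightarrow \psi$. Applying Theorem~\ref{thm:dare} once more, this time converting classical validity back into $\mathbf{st}$-validity over the same scheme $\mathbf{X}$, yields $\models_{\mathbf{X}}^{\mathbf{st}} \phi \Rightarrow \chi$ and $\models_{\mathbf{X}}^{\mathbf{st}} \chi \Rightarrow \psi$, which is the desired conclusion.

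There is no real obstacle here: the argument is a two-step translation sandwiching the classical interpolation theorem, and all of the mathematical content sits in Theorem~\ref{thm:dare} and in Craig's theorem. The only delicate point is the shared-atom assumption, which cannot be omitted in a constant-free language --- otherwise $\phi$ unsatisfiable or $\psi$ valid would call for an interpolant such as $\bot$ or $\top$ --- but this is precisely the hypothesis $\mathit{At}(\phi) \cap \mathit{At}(\psi) \neq \emptyset$ that is granted.
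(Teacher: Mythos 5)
Your proof is correct and follows exactly the paper's route: the paper also reduces the proposition to Craig's deductive interpolation theorem via Theorem~\ref{thm:dare}, merely stating the equivalence in one line where you spell out the two translation steps. Nothing further is needed.
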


\begin{proof}
It follows directly from Theorem \ref{thm:dare} that this proposition is equivalent to Craig's deductive interpolation theorem.
\end{proof}

The next fact guarantees that for any logic with an $\mathbf{ss}$- or $\mathbf{tt}$-consequence relation based on a scheme $\mathbf{X}$, its set of valid inferences is a subset of the set of valid inferences of the $\mathbf{st}$-consequence based on the same scheme.

\begin{Fact}\label{fct:subst}
    For every Boolean normal monotonic scheme $\mathbf{X}$ and $\mathbf{X}'$, $\models_{\mathbf{X}}^{\mathbf{ss}}, \models_{\mathbf{X}}^{\mathbf{tt}}  \ \subseteq \ \models_{\mathbf{X}'}^{\mathbf{st}}$.
\end{Fact}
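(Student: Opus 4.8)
The plan is to deduce this from Theorem~\ref{thm:dare} together with a completely elementary monotonicity observation about the standards themselves. First I would note that, by Theorem~\ref{thm:dare}, $\models_{\mathbf{X}'}^{\mathbf{st}} \ = \ \models_{\mathsf{CL}}$ for \emph{every} Boolean normal monotonic scheme $\mathbf{X}'$; in particular $\models_{\mathbf{X}'}^{\mathbf{st}} \ = \ \models_{\mathbf{X}}^{\mathbf{st}}$, so the choice of the second scheme is irrelevant and it suffices to establish the two inclusions $\models_{\mathbf{X}}^{\mathbf{ss}} \ \subseteq \ \models_{\mathbf{X}}^{\mathbf{st}}$ and $\models_{\mathbf{X}}^{\mathbf{tt}} \ \subseteq \ \models_{\mathbf{X}}^{\mathbf{st}}$ within the single fixed scheme $\mathbf{X}$.

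For these inclusions I would argue pointwise on $\mathbf{X}$-valuations, using only the set inclusion $\mathbf{s} = \lbrace 1 \rbrace \subseteq \lbrace 1, \sfrac{1}{2} \rbrace = \mathbf{t}$. Fix an inference $\Gamma \Rightarrow \Delta$ and an $\mathbf{X}$-valuation $v$. For the first inclusion, suppose $v \models_{\mathbf{X}}^{\mathbf{ss}} \Gamma \Rightarrow \Delta$ and that $v(\gamma) \in \mathbf{s}$ for all $\gamma \in \Gamma$; then $v(\delta) \in \mathbf{s}$ for some $\delta \in \Delta$, hence $v(\delta) \in \mathbf{t}$, so $v \models_{\mathbf{X}}^{\mathbf{st}} \Gamma \Rightarrow \Delta$. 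For the second, suppose $v \models_{\mathbf{X}}^{\mathbf{tt}} \Gamma \Rightarrow \Delta$ and that $v(\gamma) \in \mathbf{s}$ for all $\gamma \in \Gamma$; since $\mathbf{s} \subseteq \mathbf{t}$ we also have $v(\gamma) \in \mathbf{t}$ for all $\gamma$, so by $\mathbf{tt}$-satisfaction $v(\delta) \in \mathbf{t}$ for some $\delta$, and again $v \models_{\mathbf{X}}^{\mathbf{st}} \Gamma \Rightarrow \Delta$. As $v$ was arbitrary, both inclusions follow, and combining with the first paragraph gives $\models_{\mathbf{X}}^{\mathbf{ss}}, \models_{\mathbf{X}}^{\mathbf{tt}} \ \subseteq \ \models_{\mathbf{X}}^{\mathbf{st}} \ = \ \models_{\mathbf{X}'}^{\mathbf{st}}$.

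There is essentially no obstacle here: the only point worth flagging is that the cross-scheme statement (an $\mathbf{ss}$- or $\mathbf{tt}$-logic over $\mathbf{X}$ sitting inside the $\mathbf{st}$-logic over a possibly different $\mathbf{X}'$) is not immediate pointwise and genuinely relies on Theorem~\ref{thm:dare} to identify all the $\mathbf{st}$-logics with classical logic. If one preferred to avoid invoking that theorem, the same conclusion can be reached directly by restricting to bivaluations: on a bivaluation every formula takes a value in $\lbrace 0, 1 \rbrace$ by Boolean normality, so $v(\delta) \in \mathbf{t}$ forces $v(\delta) = 1$, whence the $\mathbf{ss}$- and $\mathbf{tt}$-satisfaction conditions both collapse to the classical one; hence any $\mathbf{ss}$- or $\mathbf{tt}$-valid inference is classically valid, i.e.\ $\mathbf{st}$-valid over any $\mathbf{X}'$.
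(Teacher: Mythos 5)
Your main argument is correct and is essentially the paper's proof read in the contrapositive: the paper takes a countervaluation to $\models_{\mathbf{X}}^{\mathbf{st}}$ (all premises $1$, all conclusions $0$) and observes it refutes both $\models_{\mathbf{X}}^{\mathbf{ss}}$ and $\models_{\mathbf{X}}^{\mathbf{tt}}$, while you argue pointwise from $\mathbf{s} \subseteq \mathbf{t}$; both then invoke Theorem~\ref{thm:dare} to make the second scheme irrelevant. One caveat on your closing aside: the bivaluation route does not actually avoid Theorem~\ref{thm:dare}, since it only yields classical validity, and passing from ``classically valid'' to ``$\mathbf{st}$-valid over all \emph{tri}valuations of $\mathbf{X}'$'' is precisely the nontrivial direction of that theorem.
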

\begin{proof}
   Let $\mathbf{X}$ be a Boolean normal monotonic scheme. Assume $\not\models_{\mathbf{X}}^{\mathbf{st}} \Gamma \Rightarrow \Delta$. Then there is an $\mathbf{X}$-valuation $v$ such that $v(\gamma) =1$ for all $\gamma \in \Gamma$ and $v(\delta)=0$ for all $\delta \in \Delta$. Therefore, $\not\models_{\mathbf{X}}^{\mathbf{ss}} \Gamma \Rightarrow \Delta$ and $\not\models_{\mathbf{X}}^{\mathbf{tt}} \Gamma \Rightarrow \Delta$. Now, by Theorem \ref{thm:dare}, $\models_{\mathbf{X}}^{\mathbf{st}} = \models_{\mathbf{X}'}^{\mathbf{st}}$ for all Boolean normal monotonic scheme $\mathbf{X}'$, so $\models_{\mathbf{X}}^{\mathbf{ss}}, \models_{\mathbf{X}}^{\mathbf{tt}}  \ \subseteq \ \models_{\mathbf{X}'}^{\mathbf{st}}$.
\end{proof}

Given Theorem \ref{thm:dare}, this fact implies in particular that classical logic is an extension of any logic with an $\mathbf{ss}$- or $\mathbf{tt}$-consequence relation based on a Boolean normal monotonic scheme. 

Our first failure result can now be stated: For any pair of logics, where both are based on a Boolean normal monotonic scheme, and the first is defined by a $\mathbf{tt}$-consequence relation, the property fails regardless of the choice of standards for the second logic.

\begin{Proposition}\label{prp:tt}
    For every standard $\mathbf{xy}$, if $\mathbf{X}$ is a Boolean normal monotonic scheme, the following split interpolation property does not hold: 
    
\vspace{7pt}
If $\mathit{At}(\phi) \cap \mathit{At}(\psi) \neq \emptyset$ and $\models_{\mathbf{X}}^{\mathbf{st}} \phi \Rightarrow \psi$, then there is a formula $\chi$ such that each of its atoms is in $\phi$ and $\psi$, and $\models_{\mathbf{X}}^{\mathbf{tt}} \phi \Rightarrow \chi$ and $\models_{\mathbf{X}}^{\mathbf{xy}} \chi \Rightarrow \psi$.
\end{Proposition}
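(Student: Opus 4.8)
The plan is to refute the property with a single pair of formulas that works for an arbitrary Boolean normal monotonic scheme $\mathbf{X}$ and, simultaneously, for every standard $\mathbf{xy}$. I would take $\phi = p \lor (q \wedge \neg q)$ and $\psi = p$ with $p \neq q$. Then $\mathit{At}(\phi) \cap \mathit{At}(\psi) = \{p\} \neq \emptyset$, and since $\phi$ is classically equivalent to $p$ we have $\models_{\mathsf{CL}} \phi \Rightarrow \psi$, equivalently $\models^{\mathbf{st}}_{\mathbf{X}} \phi \Rightarrow \psi$ by Theorem~\ref{thm:dare}. The remaining task is to show that no formula $\chi$ with $\mathit{At}(\chi) \subseteq \{p\}$ satisfies both $\models^{\mathbf{tt}}_{\mathbf{X}} \phi \Rightarrow \chi$ and $\models^{\mathbf{xy}}_{\mathbf{X}} \chi \Rightarrow \psi$.

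The first step eliminates the parameter $\mathbf{xy}$. Every intersective mixed consequence relation based on $\mathbf{X}$ is included in $\models^{\mathbf{st}}_{\mathbf{X}}$: this is Fact~\ref{fct:subst} for $\mathbf{ss}$ and $\mathbf{tt}$, and for the other standards it follows from the trivial inclusions $\models^{\mathbf{ts}}_{\mathbf{X}} \subseteq \models^{\mathbf{tt}}_{\mathbf{X}}$ (since $\mathbf{s} \subseteq \mathbf{t}$) and $\models^{\mathbf{ss} \cap \mathbf{tt}}_{\mathbf{X}} \subseteq \models^{\mathbf{ss}}_{\mathbf{X}}$. As $\models^{\mathbf{st}}_{\mathbf{X}} = \models_{\mathsf{CL}}$ by Theorem~\ref{thm:dare}, the condition $\models^{\mathbf{xy}}_{\mathbf{X}} \chi \Rightarrow \psi$ implies $\models_{\mathsf{CL}} \chi \Rightarrow \psi$ for every standard $\mathbf{xy}$. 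So it suffices to show that no formula $\chi$ on the single atom $p$ satisfies both $\models^{\mathbf{tt}}_{\mathbf{X}} \phi \Rightarrow \chi$ and $\models_{\mathsf{CL}} \chi \Rightarrow p$.

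Next I would fix such a $\chi$ and derive a contradiction. Because the language has no constants, $\mathit{At}(\chi) = \{p\}$, so under any bivaluation all subformulas of $\chi$ receive classical values by Boolean normality; in particular, under the bivaluation sending $p$ to $0$ the formula $\chi$ receives a classical value, and $\models_{\mathsf{CL}} \chi \Rightarrow p$ forces that value to be $0$. Now consider the $\mathbf{X}$-valuation $v^{*}$ with $v^{*}(p) = 0$ and $v^{*}(q) = \sfrac{1}{2}$. On one hand $v^{*}$ agrees with that bivaluation on $\mathit{At}(\chi)$, so $v^{*}(\chi) = 0 \notin \mathbf{t}$. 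On the other hand, using only truth-table entries that are settled for every Boolean normal monotonic scheme in Figure~\ref{fig:trt} (namely $f^{\neg}(\sfrac{1}{2}) = \sfrac{1}{2}$, $f^{\wedge}(\sfrac{1}{2}, \sfrac{1}{2}) = \sfrac{1}{2}$, and $f^{\lor}(0, \sfrac{1}{2}) = \sfrac{1}{2}$), one computes $v^{*}(q \wedge \neg q) = \sfrac{1}{2}$, hence $v^{*}(\phi) = f^{\lor}(0, \sfrac{1}{2}) = \sfrac{1}{2} \in \mathbf{t}$. Thus $v^{*} \not\models^{\mathbf{tt}}_{\mathbf{X}} \phi \Rightarrow \chi$, contradicting $\models^{\mathbf{tt}}_{\mathbf{X}} \phi \Rightarrow \chi$.

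The delicate point is the choice of $\phi$, not the verifications, which are routine. The naive candidate $\phi = p \lor q$ does not work uniformly: whenever $\models_{\mathsf{CL}} (p \lor q) \Rightarrow \psi$ with $q \notin \mathit{At}(\psi)$, the formula $\psi$ is forced to be a classical tautology, and then $\chi = p \lor \neg p$ turns out to be a legitimate interpolant for the standards $\mathbf{st}$ and $\mathbf{tt}$ (it is $\mathbf{t}$-valid, hence $\mathbf{tt}$-entailed by $\phi$, and it entails the tautology $\psi$, which is itself $\mathbf{t}$-valid and classically entailed by $p \lor \neg p$). The conjunct $q \wedge \neg q$ is inserted precisely to keep $\psi = p$ a non-tautology while preserving $\models_{\mathsf{CL}} \phi \Rightarrow \psi$: it is classically a falsehood yet takes the middle value whenever $q$ does, so $\phi$ can receive a designated value at $v^{*}$ even though $v^{*}(p) = 0$, which is exactly the mismatch that no interpolant built from $p$ alone can accommodate. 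The one other thing to watch is uniformity across the 16 schemes, and this is secured by noting that every truth-table cell used in computing $v^{*}(\phi)$ and $v^{*}(\chi)$ is one of the settled cells of Figure~\ref{fig:trt}.
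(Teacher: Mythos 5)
Your proposal is correct and follows essentially the same route as the paper: the same counterexample $p \lor (q \wedge \neg q) \Rightarrow p$ and the same witnessing valuation $v(p)=0$, $v(q)=\sfrac{1}{2}$, with Boolean normality forcing $\chi$ to take a classical value there. The only difference is bookkeeping --- the paper uses the $\mathbf{tt}$-premise to force $v(\chi)=1$ and then refutes $\models_{\mathbf{X}}^{\mathbf{xy}} \chi \Rightarrow p$, while you reduce the $\mathbf{xy}$-premise to classical entailment to force $v(\chi)=0$ and then refute $\models_{\mathbf{X}}^{\mathbf{tt}} \phi \Rightarrow \chi$; both are equivalent.
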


\begin{proof}
    We show that $\models_{\mathbf{X}}^{\mathbf{st}} p \lor (q \wedge \neg q) \Rightarrow p$, but there is no $\chi$ such that $\mathit{At}(\chi) \subseteq \mathit{At}(\phi) \cap \mathit{At}(\psi)$, $\models_{\mathbf{X}}^{\mathbf{tt}} p \lor (q \wedge \neg q) \Rightarrow \chi$ and $\models_{\mathbf{X}}^{\mathbf{xy}} \chi \Rightarrow p$. Assume for the sake of contradiction that there is such a $\chi$, and let $v$ be a valuation such that $v(p)= 0$ and $v(q)=\sfrac{1}{2}$. Then $v(p \lor (q \wedge \neg q)) = \sfrac{1}{2}$ since $\mathbf{X}$ is Boolean normal monotonic. Now, $\models_{\mathbf{X}}^{\mathbf{tt}} p \lor (q \wedge \neg q) \Rightarrow \chi$, so $v(\chi) \neq 0$. But $\mathit{At}(\chi)=\lbrace p \rbrace \subseteq \mathit{At}(\phi) \cap \mathit{At}(\psi)$, so $v(\chi) = 1$ since $v(p) \neq \sfrac{1}{2}$ and $\mathbf{X}$ is Boolean normal. Therefore, $v\not\models_{\mathbf{X}}^{\mathbf{xy}} \chi \Rightarrow p$, which is impossible.
\end{proof}

This result can be immediately extended to logics based on the intersection of $\mathbf{ss}$ and $\mathbf{tt}$.

\begin{Corollary}
    Let $\mathbf{X}$ be a Boolean normal monotonic scheme. For every standard $\mathbf{xy}$, $\mathbf{X}$ does not satisfy the following split interpolation property: 

\vspace{7pt}
If $\mathit{At}(\phi) \cap \mathit{At}(\psi) \neq \emptyset$ and $\models_{\mathbf{X}}^{\mathbf{st}} \phi \Rightarrow \psi$, then there is a formula $\chi$ such that each of its atoms is in $\phi$ and $\psi$, and $\models_{\mathbf{X}}^{\mathbf{ss} \cap \mathbf{tt}} \phi \Rightarrow \chi$ and $\models_{\mathbf{X}}^{\mathbf{xy}} \chi \Rightarrow \psi$.
\end{Corollary}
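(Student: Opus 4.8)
The plan is to reduce this statement to Proposition~\ref{prp:tt} by observing that the order-theoretic consequence relation refines the $\mathbf{tt}$-consequence relation. First I would note that, directly from the definition of $\mathbf{ss} \cap \mathbf{tt}$-validity, $\models_{\mathbf{X}}^{\mathbf{ss} \cap \mathbf{tt}} \Gamma \Rightarrow \Delta$ implies $\models_{\mathbf{X}}^{\mathbf{tt}} \Gamma \Rightarrow \Delta$; that is, $\models_{\mathbf{X}}^{\mathbf{ss} \cap \mathbf{tt}} \ \subseteq \ \models_{\mathbf{X}}^{\mathbf{tt}}$ for every Boolean normal monotonic scheme $\mathbf{X}$.

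Next I would reuse the very same witnessing pair as in the proof of Proposition~\ref{prp:tt}, namely $\phi = p \lor (q \wedge \neg q)$ and $\psi = p$. We already know there that $\mathit{At}(\phi) \cap \mathit{At}(\psi) = \lbrace p \rbrace \neq \emptyset$ and $\models_{\mathbf{X}}^{\mathbf{st}} \phi \Rightarrow \psi$. Suppose, for contradiction, that for some standard $\mathbf{xy}$ there were a formula $\chi$ with $\mathit{At}(\chi) \subseteq \lbrace p \rbrace$ such that $\models_{\mathbf{X}}^{\mathbf{ss} \cap \mathbf{tt}} \phi \Rightarrow \chi$ and $\models_{\mathbf{X}}^{\mathbf{xy}} \chi \Rightarrow \psi$. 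By the inclusion above, $\models_{\mathbf{X}}^{\mathbf{tt}} \phi \Rightarrow \chi$, so $\chi$ would be an interpolant of exactly the kind whose existence Proposition~\ref{prp:tt} rules out --- contradiction. Hence no such $\chi$ exists, and the displayed property fails.

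I do not expect any real obstacle here, since all the work is done by Proposition~\ref{prp:tt}; the only point to check is the trivial inclusion $\models_{\mathbf{X}}^{\mathbf{ss} \cap \mathbf{tt}} \subseteq \models_{\mathbf{X}}^{\mathbf{tt}}$. If one preferred a self-contained argument, one could instead rerun the valuation $v$ with $v(p) = 0$ and $v(q) = \sfrac{1}{2}$ from that proof: it gives $v(\phi) = \sfrac{1}{2}$, the $\mathbf{tt}$-component of $\models_{\mathbf{X}}^{\mathbf{ss} \cap \mathbf{tt}} \phi \Rightarrow \chi$ forces $v(\chi) \neq 0$, Boolean normality forces $v(\chi) = 1$ (as $v(p) \in \lbrace 0, 1 \rbrace$), and then $\models_{\mathbf{X}}^{\mathbf{xy}} \chi \Rightarrow p$ fails at $v$ because $v(p) = 0 \notin \mathbf{y}$.
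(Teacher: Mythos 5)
Your proposal is correct and matches the paper's own proof, which likewise derives the corollary from Proposition~\ref{prp:tt} together with the inclusion $\models_{\mathbf{X}}^{\mathbf{ss} \cap \mathbf{tt}} \subseteq \models_{\mathbf{X}}^{\mathbf{tt}}$. Your additional self-contained rerun of the counterexample valuation is a fine (if unnecessary) elaboration of the same argument.
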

\begin{proof}
    From Proposition \ref{prp:tt} and $\models_{\mathbf{X}}^{\mathbf{ss} \cap \mathbf{tt}} \subseteq \models_{\mathbf{X}}^{\mathbf{tt}}$.
\end{proof}

Dually, for any pair of logics, if the second is defined by an $\mathbf{ss}$-consequence relation, the property fails regardless of the choice of consequence relation for the first logic, and similarly for the intersection of $\mathbf{ss}$ and $\mathbf{tt}$.

\begin{Proposition}\label{prp:ss}
    Let $\mathbf{X}$ be a Boolean normal monotonic scheme. For every standard $\mathbf{xy}$, $\mathbf{X}$ does not satisfy the following split interpolation property: 

\vspace{7pt}
If $\mathit{At}(\phi) \cap \mathit{At}(\psi) \neq \emptyset$ and $\models_{\mathbf{X}}^{\mathbf{st}} \phi \Rightarrow \psi$, then there is a formula $\chi$ such that each of its atoms is in $\phi$ and $\psi$, and $\models_{\mathbf{X}}^{\mathbf{xy}} \phi \Rightarrow \chi$ and $\models_{\mathbf{X}}^{\mathbf{ss}} \chi \Rightarrow \psi$.
\end{Proposition}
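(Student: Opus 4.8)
The plan is to produce a single counterexample that refutes the property simultaneously for every choice of $\mathbf{xy}$, by dualizing the argument behind Proposition~\ref{prp:tt}. In that proof the obstruction was a \emph{premise} whose value can be raised to $\sfrac{1}{2}$ while an implied atomic subformula is forced to stay classical; here the delicate formula should instead be the \emph{conclusion} of the $\mathbf{ss}$-inference. Concretely, I would take $\phi = p$ and $\psi = p \wedge (q \lor \neg q)$. Then $\mathit{At}(\phi) \cap \mathit{At}(\psi) = \{p\} \neq \emptyset$, and since $q \lor \neg q$ is a classical tautology, $\models_{\mathsf{CL}} \phi \Rightarrow \psi$; by Theorem~\ref{thm:dare}, $\models_{\mathbf{X}}^{\mathbf{st}} \phi \Rightarrow \psi$ for every Boolean normal monotonic scheme $\mathbf{X}$, so the antecedent of the split interpolation property is satisfied.

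Next I would argue by contradiction. Suppose a suitable $\chi$ exists: $\mathit{At}(\chi) \subseteq \{p\}$, $\models_{\mathbf{X}}^{\mathbf{xy}} p \Rightarrow \chi$, and $\models_{\mathbf{X}}^{\mathbf{ss}} \chi \Rightarrow p \wedge (q \lor \neg q)$. Let $v$ be the valuation with $v(p) = 1$ and $v(q) = \sfrac{1}{2}$. The three relevant truth-table entries, $f^{\neg}(\sfrac{1}{2}) = \sfrac{1}{2}$, $f^{\lor}(\sfrac{1}{2}, \sfrac{1}{2}) = \sfrac{1}{2}$, and $f^{\wedge}(1, \sfrac{1}{2}) = \sfrac{1}{2}$, are all \emph{settled} in Figure~\ref{fig:trt} (none of them is a comma-separated "choice" cell), so, irrespective of which of the $16$ schemes $\mathbf{X}$ is, $v(q \lor \neg q) = \sfrac{1}{2}$ and hence $v(p \wedge (q \lor \neg q)) = \sfrac{1}{2}$. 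Since $\sfrac{1}{2} \notin \mathbf{s}$ and $\models_{\mathbf{X}}^{\mathbf{ss}} \chi \Rightarrow p \wedge (q \lor \neg q)$, we get $v(\chi) \neq 1$. On the other hand every atom of $\chi$ lies in $\{p\}$ and $v(p) = 1 \in \{0,1\}$, so Boolean normality forces $v(\chi) \in \{0,1\}$; hence $v(\chi) = 0$.

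To close, I would invoke the first interpolation clause: $v(p) = 1$ while $v(\chi) = 0$. For every standard $\mathbf{xy}$ with $\mathbf{x}, \mathbf{y} \in \{\mathbf{s}, \mathbf{t}\}$ we have $1 \in \mathbf{x}$ and $0 \notin \mathbf{y}$, so $v \not\models_{\mathbf{X}}^{\mathbf{xy}} p \Rightarrow \chi$, contradicting $\models_{\mathbf{X}}^{\mathbf{xy}} p \Rightarrow \chi$. This gives the claimed failure for every $\mathbf{xy}$, and the analogous statement with the $\mathbf{ss} \cap \mathbf{tt}$-consequence relation on the right then follows immediately from $\models_{\mathbf{X}}^{\mathbf{ss} \cap \mathbf{tt}} \subseteq \models_{\mathbf{X}}^{\mathbf{ss}}$, exactly as in the corollary to Proposition~\ref{prp:tt}.

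I do not expect a genuine obstacle here; the content is a routine dualization. The only mild point of care is that the counterexample must be insensitive to the left-hand standard $\mathbf{xy}$, and this is automatic because $1$ is designated in both $\mathbf{s}$ and $\mathbf{t}$ while $0$ is designated in neither — precisely the symmetry already exploited in Proposition~\ref{prp:tt}. One should also note in passing that, since the language contains no propositional constants, $\mathit{At}(\chi)$ is nonempty and therefore equals $\{p\}$; but the argument only uses $\mathit{At}(\chi) \subseteq \{p\}$, so nothing depends on this.
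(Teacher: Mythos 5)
Your proposal is correct and follows essentially the same route as the paper, which uses exactly the counterexample $\phi = p$, $\psi = p \wedge (q \lor \neg q)$ and explicitly defers the details to a dualization of the argument for Proposition~\ref{prp:tt}. Your filled-in details (the valuation $v(p)=1$, $v(q)=\sfrac{1}{2}$, the observation that the relevant truth-table entries are settled in every Boolean normal monotonic scheme, and the use of Boolean normality to force $v(\chi)=0$) are exactly what that deferral intends.
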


\begin{proof}
It is enough to show that $\models_{\mathbf{X}}^{\mathbf{st}} p \Rightarrow p \wedge (q \lor \neg q)$, but there is no $\chi$ such that $\mathit{At}(\chi) \subseteq \mathit{At}(\phi) \cap \mathit{At}(\psi)$, $\models_{\mathbf{X}}^{\mathbf{xy}} p \Rightarrow \chi$ and $\models_{\mathbf{X}}^{\mathbf{ss}} \chi \Rightarrow p \wedge (q \lor \neg q)$. This can be accomplished in a manner similar to the proof of Proposition \ref{prp:tt}. 
\end{proof}

\begin{Corollary}
    Let $\mathbf{X}$ be a Boolean normal monotonic scheme. For every standard $\mathbf{xy}$, $\mathbf{X}$ does not satisfy the following split interpolation property: 

\vspace{7pt}
If $\mathit{At}(\phi) \cap \mathit{At}(\psi) \neq \emptyset$ and $\models_{\mathbf{X}}^{\mathbf{st}} \phi \Rightarrow \psi$, then there is a formula $\chi$ such that each of its atoms is in $\phi$ and $\psi$, and $\models_{\mathbf{X}}^{\mathbf{xy}} \phi \Rightarrow \chi$ and $\models_{\mathbf{X}}^{\mathbf{ss} \cap \mathbf{tt}} \chi \Rightarrow \psi$.
\end{Corollary}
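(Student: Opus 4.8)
The plan is to reduce directly to Proposition~\ref{prp:ss}, exactly as the corollary following Proposition~\ref{prp:tt} reduces to that proposition. The key observation is the inclusion $\models_{\mathbf{X}}^{\mathbf{ss} \cap \mathbf{tt}} \ \subseteq \ \models_{\mathbf{X}}^{\mathbf{ss}}$, which is immediate from the definition of $\mathbf{ss} \cap \mathbf{tt}$-validity: any inference valid under $\mathbf{ss} \cap \mathbf{tt}$ is in particular valid under $\mathbf{ss}$.

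Given this, I would argue by contradiction. Suppose the displayed split interpolation property held for some standard $\mathbf{xy}$. Instantiating it with the pair $\phi = p$ and $\psi = p \wedge (q \lor \neg q)$ used in the proof of Proposition~\ref{prp:ss} — these satisfy $\mathit{At}(\phi) \cap \mathit{At}(\psi) = \lbrace p \rbrace \neq \emptyset$ and $\models_{\mathbf{X}}^{\mathbf{st}} \phi \Rightarrow \psi$ — would yield a formula $\chi$ with $\mathit{At}(\chi) \subseteq \lbrace p \rbrace$, $\models_{\mathbf{X}}^{\mathbf{xy}} \phi \Rightarrow \chi$, and $\models_{\mathbf{X}}^{\mathbf{ss} \cap \mathbf{tt}} \chi \Rightarrow \psi$. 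By the inclusion above, the last condition gives $\models_{\mathbf{X}}^{\mathbf{ss}} \chi \Rightarrow \psi$, so this same $\chi$ would witness the split interpolation property for the pair of standards $\langle \mathbf{xy}, \mathbf{ss}\rangle$ and the scheme $\mathbf{X}$, contradicting Proposition~\ref{prp:ss}.

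There is no genuine obstacle here: all the semantic content — producing the classical entailment $\models_{\mathbf{X}}^{\mathbf{st}} p \Rightarrow p \wedge (q \lor \neg q)$ and showing that no $\chi$ with $\mathit{At}(\chi) \subseteq \lbrace p \rbrace$ can simultaneously satisfy $\models_{\mathbf{X}}^{\mathbf{xy}} p \Rightarrow \chi$ and $\models_{\mathbf{X}}^{\mathbf{ss}} \chi \Rightarrow p \wedge (q \lor \neg q)$ — is already carried out in Proposition~\ref{prp:ss} (by the same Boolean normality argument as in Proposition~\ref{prp:tt}). The only thing to verify is the purely definitional inclusion of the two consequence relations. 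This argument mirrors the proof of the corollary to Proposition~\ref{prp:tt}, with $\mathbf{ss}$ in place of $\mathbf{tt}$.
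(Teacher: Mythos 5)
Your proposal is correct and coincides with the paper's own proof, which likewise derives the corollary from Proposition \ref{prp:ss} together with the inclusion $\models_{\mathbf{X}}^{\mathbf{ss} \cap \mathbf{tt}} \subseteq \models_{\mathbf{X}}^{\mathbf{ss}}$. Your write-up merely spells out the contradiction argument that the paper leaves implicit.
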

\begin{proof}
    From Proposition \ref{prp:ss} and $\models_{\mathbf{X}}^{\mathbf{ss} \cap \mathbf{tt}} \subseteq \models_{\mathbf{X}}^{\mathbf{ss}}$.
\end{proof}

We proceed with the following propositions showing that the property will also fail when one of the two logics is based on a $\mathbf{ts}$-consequence relation. Since any combination of a Boolean normal monotonic scheme and a $\mathbf{ts}$-consequence relation results in an empty logic (see \citealp{da2023three}), that is, a logic that does not validate any inference, the proofs are immediate.

\begin{Proposition}\label{prp:ts}
    Let $\mathbf{X}$ be a Boolean normal monotonic scheme. For every standard $\mathbf{xy}$, $\mathbf{X}$ does not satisfy the following split interpolation property: 

\vspace{7pt}
If $\mathit{At}(\phi) \cap \mathit{At}(\psi) \neq \emptyset$ and $\models_{\mathbf{X}}^{\mathbf{st}} \phi \Rightarrow \psi$, then there is a formula $\chi$ such that each of its atoms is in $\phi$ and $\psi$, and $\models_{\mathbf{X}}^{\mathbf{ts}} \phi \Rightarrow \chi$ and $\models_{\mathbf{X}}^{\mathbf{xy}} \chi \Rightarrow \psi$.
\end{Proposition}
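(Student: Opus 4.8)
The plan is to reduce everything to a single structural fact: combining any Boolean normal monotonic scheme $\mathbf{X}$ with the $\mathbf{ts}$ standard produces the empty logic, so that $\models_{\mathbf{X}}^{\mathbf{ts}} \Gamma \Rightarrow \Delta$ holds for no inference at all. I would establish this (following \citealp{da2023three}) by considering the trivaluation $v$ that assigns $\sfrac{1}{2}$ to every atom. Reading off Figure \ref{fig:trt}, every Boolean normal monotonic scheme satisfies $f^{\neg}(\sfrac{1}{2}) = \sfrac{1}{2}$, $f^{\wedge}(\sfrac{1}{2}, \sfrac{1}{2}) = \sfrac{1}{2}$, and $f^{\lor}(\sfrac{1}{2}, \sfrac{1}{2}) = \sfrac{1}{2}$, so a routine induction on complexity --- of the same shape as the one in Corollary \ref{cor:mono} --- yields $v(\phi) = \sfrac{1}{2}$ for all $\phi \in \mathcal{L}$. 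Consequently, under $v$ every premise of an arbitrary inference lands in $\mathbf{t}$ while no conclusion lands in $\mathbf{s} = \lbrace 1 \rbrace$; hence $v \not\models_{\mathbf{X}}^{\mathbf{ts}} \Gamma \Rightarrow \Delta$, so $\models_{\mathbf{X}}^{\mathbf{ts}}$ is empty.

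Granting this, I would then pick the trivial witness $\phi = \psi = p$. The antecedent of the interpolation conditional is satisfied: $\mathit{At}(\phi) \cap \mathit{At}(\psi) = \lbrace p \rbrace \neq \emptyset$, and $\models_{\mathbf{X}}^{\mathbf{st}} p \Rightarrow p$ holds (directly, or via Theorem \ref{thm:dare} together with $\models_{\mathsf{CL}} p \Rightarrow p$). For the consequent to hold, there would have to be a formula $\chi$ with $\mathit{At}(\chi) \subseteq \lbrace p \rbrace$ satisfying $\models_{\mathbf{X}}^{\mathbf{ts}} \phi \Rightarrow \chi$ and $\models_{\mathbf{X}}^{\mathbf{xy}} \chi \Rightarrow \psi$. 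But the first requirement already fails for every $\chi$, since no inference is $\mathbf{ts}$-valid. So the property fails, and --- crucially --- it fails for all second standards $\mathbf{xy}$ at once, because the obstruction never reaches the second component of the putative interpolation.

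I do not expect a genuine obstacle here: the argument is essentially a one-line consequence of the emptiness of $\models_{\mathbf{X}}^{\mathbf{ts}}$. The only point worth spelling out is the verification that the constant-$\sfrac{1}{2}$ valuation sends every formula to $\sfrac{1}{2}$, and even that is immediate from Boolean normality and monotonicity; alternatively, one can simply cite \cite{da2023three} for the emptiness of the $\mathbf{ts}$ logics and skip the induction entirely.
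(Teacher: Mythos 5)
Your proposal is correct and takes essentially the same route as the paper: the paper also reduces the claim to the emptiness of $\models_{\mathbf{X}}^{\mathbf{ts}}$ (citing \citealp{da2023three}) and concludes immediately that no interpolant $\chi$ can satisfy $\models_{\mathbf{X}}^{\mathbf{ts}} \phi \Rightarrow \chi$. You merely spell out the constant-$\sfrac{1}{2}$ valuation argument and an explicit witness $\phi=\psi=p$, which the paper leaves implicit.
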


\begin{proof}
    The proof is straightforward since there are no $\phi$ and $ \chi$ such that $\models_{\mathbf{X}}^{\mathbf{ts}} \phi \Rightarrow \chi$.
\end{proof}

\begin{Proposition}\label{prp:ts2}
    Let $\mathbf{X}$ be a Boolean normal monotonic scheme. For every standard $\mathbf{xy}$, $\mathbf{X}$ does not satisfy the following split interpolation property:
    
\vspace{7pt}
If $\mathit{At}(\phi) \cap \mathit{At}(\psi) \neq \emptyset$ and $\models_{\mathbf{X}}^{\mathbf{st}} \phi \Rightarrow \psi$, then there is a formula $\chi$ such that each of its atoms is in $\phi$ and $\psi$, and $\models_{\mathbf{X}}^{\mathbf{xy}} \phi \Rightarrow \chi$ and $\models_{\mathbf{X}}^{\mathbf{ts}} \chi \Rightarrow \psi$.
\end{Proposition}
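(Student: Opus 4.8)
The plan is to exhibit a single instance of the antecedent of the property --- a pair $\phi,\psi$ sharing an atom with $\models_{\mathbf{X}}^{\mathbf{st}}\phi\Rightarrow\psi$ --- for which no interpolant can exist, the obstruction being that the $\mathbf{ts}$-consequence relation built on \emph{any} Boolean normal monotonic scheme validates no inference whatsoever (as already observed by \citealp{da2023three}). Since this failure does not depend on the second logic's internal structure, the choice of the first standard $\mathbf{xy}$ is immaterial, which is exactly what the proposition asserts.

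Concretely, I would take $\phi=\psi=p$. Then $\mathit{At}(\phi)\cap\mathit{At}(\psi)=\{p\}\neq\emptyset$, and $\models_{\mathbf{X}}^{\mathbf{st}}p\Rightarrow p$ holds trivially (equivalently, by Theorem \ref{thm:dare}, because $\models_{\mathsf{CL}}p\Rightarrow p$). Suppose toward a contradiction that there were an interpolant $\chi$ with $\mathit{At}(\chi)\subseteq\{p\}$, $\models_{\mathbf{X}}^{\mathbf{xy}}p\Rightarrow\chi$, and $\models_{\mathbf{X}}^{\mathbf{ts}}\chi\Rightarrow p$. The last conjunct is the one I would refute; the requirement $\models_{\mathbf{X}}^{\mathbf{xy}}p\Rightarrow\chi$ plays no role, which is the reason the result is uniform in $\mathbf{xy}$.

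The key step is the observation that the valuation $v$ with $v(q)=\sfrac{1}{2}$ for every atom $q$ assigns $\sfrac{1}{2}$ to every formula. This is immediate from Figure \ref{fig:trt}: $f^{\neg}(\sfrac{1}{2})=\sfrac{1}{2}$, and in every Boolean normal monotonic scheme $f^{\wedge}(\sfrac{1}{2},\sfrac{1}{2})=f^{\lor}(\sfrac{1}{2},\sfrac{1}{2})=\sfrac{1}{2}$, so a routine induction on complexity (in the style of Corollary \ref{cor:mono}) gives $v(\theta)=\sfrac{1}{2}$ for all $\theta\in\mathcal{L}$. In particular $v(\chi)=\sfrac{1}{2}\in\mathbf{t}$ while $v(p)=\sfrac{1}{2}\notin\mathbf{s}$, so $v\not\models_{\mathbf{X}}^{\mathbf{ts}}\chi\Rightarrow p$, contradicting $\models_{\mathbf{X}}^{\mathbf{ts}}\chi\Rightarrow p$. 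Hence no such $\chi$ exists and the property fails.

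I do not expect any genuine obstacle: the entire content is the emptiness of the $\mathbf{ts}$-consequence relation, which reduces to the elementary fact about the all-$\sfrac{1}{2}$ valuation above. The only point requiring a little care is verifying that the chosen witness really satisfies the antecedent --- atom-sharing together with classical entailment --- and $\phi=\psi=p$ plainly does, so the proof is as short as that of Proposition \ref{prp:ts}.
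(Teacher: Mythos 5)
Your proof is correct and takes essentially the same route as the paper: the whole argument rests on the emptiness of the $\mathbf{ts}$-consequence relation over any Boolean normal monotonic scheme, which the paper cites from \cite{da2023three} and which you re-derive via the all-$\sfrac{1}{2}$ valuation. Your version is slightly more explicit (naming the witness $\phi=\psi=p$ and proving the emptiness fact rather than invoking it), but the content is identical.
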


\begin{proof}
    The proof is straightforward since there are no $\chi$ and $ \psi$ such that $\models_{\mathbf{X}}^{\mathbf{ts}} \chi \Rightarrow \psi$.
\end{proof}

Figure \ref{tbl:1} summarizes the cases for which the split interpolation fails or holds, independently of a choice of scheme. The cases not addressed---those involving the combination of a $\mathbf{ss}$-consequence relation with a $\mathbf{tt}$- or $\mathbf{st}$-relation, or an $\mathbf{st}$-consequence relation with a $\mathbf{tt}$- one---require a more nuanced approach that involves selecting a specific scheme. We explore these cases in the next section.

\begin{figure}[t]
\begin{center}
\begin{displaymath}
\begin{tabular}{| c | c | c | c |  c | c |}
\hline
 \diagbox{$\mathcal{C}$}{$\mathcal{C'}$} & $\mathbf{ss}$ & $\mathbf{tt}$ & $\mathbf{st}$ & $\mathbf{ts}$ & $\mathbf{ss} \cap \mathbf{tt}$\\
\hline
$\mathbf{ss}$ & \text{\xmark} &  &  & \text{\xmark} & \text{\xmark}\\
 \hline
$\mathbf{tt}$ & \text{\xmark} & \text{\xmark} & \text{\xmark} & \text{\xmark} & \text{\xmark}\\
 \hline
$\mathbf{st}$ & \text{\xmark} &  & \text{\cmark} & \text{\xmark} & \text{\xmark}\\
 \hline
$\mathbf{ts}$ & \text{\xmark} & \text{\xmark} & \text{\xmark} & \text{\xmark} & \text{\xmark}\\
 \hline
$\mathbf{ss} \cap \mathbf{tt}$ & \text{\xmark} & \text{\xmark} & \text{\xmark} & \text{\xmark} & \text{\xmark}\\
\hline
\end{tabular}
\end{displaymath}
\caption{Failure/success of the property independent of a choice of scheme}\label{tbl:1}
\end{center}
\end{figure}

\section{Results dependent on a choice of scheme}\label{sec:rdc}
We will concentrate here on the results that depend on a choice of scheme. Building on the findings from the previous section, we will examine the combination of an $\mathbf{ss}$-consequence relation with a $\mathbf{tt}$- or $\mathbf{st}$-relation, as well as an $\mathbf{st}$-consequence relation with a $\mathbf{tt}$- one. We will focus more particularly on the $\mathbf{ss}/\mathbf{tt}$ combination, as the other combinations can be easily derived from it, as we will demonstrate. The split interpolation property can therefore be  temporarily rephrased as follows:

\vspace{7pt}
If $\mathit{At}(\phi) \cap \mathit{At}(\psi) \neq \emptyset$ and $\models_{\mathbf{X}}^{\mathbf{st}} \phi \Rightarrow \psi$, then there is a formula $\chi$ such that each of its atoms is in $\phi$ and $\psi$, and $\models_{\mathbf{X}}^{\mathbf{ss}} \phi \Rightarrow \chi$ and $\models_{\mathbf{X}}^{\mathbf{tt}} \chi \Rightarrow \psi$.
\vspace{7pt}

The question then becomes: for which Boolean normal monotonic scheme $\mathbf{X}$ does the previous property hold? We will first focus on the positive results and list the schemes for which the property holds, before turning to the negative results in Subsection \ref{ssct:nrds}.

\subsection{Positive results}
We start by defining the key notion of a partial sharpening, adapted from \cite{blomet2024sttsproductsum} and first defined in \cite{scambler2020classical}. Given a Boolean normal valuation $v$, a Boolean normal valuation $v^*$ defined over the same scheme is a partial sharpening of $v$ with respect to a specified set of atoms if it agrees with $v$ on all the
atoms that take a classical value, and possibly disagrees with $v$ on the others.

\begin{Definition}[Partial sharpening]\label{D1}
Let $\mathbf{X}$ be a Boolean normal scheme and $v$ be a $\mathbf{X}$-valuation. A $\mathbf{X}$-valuation $v^*$ is said to be a \textit{partial sharpening} of $v$ with respect to a set of atoms $\Sigma \subseteq \textit{Var}$, if for all $p \in \Sigma$, $v(p) \le_I v^*(p)$.
\end{Definition}

The next two corollaries illustrate crucial properties of the notion of partial sharpening, which will prove useful later.

\begin{Corollary}\label{C1}
If $v^*$ is a partial sharpening of $v$ with respect to $\Sigma$, then $v^*$ is a partial sharpening of $v$ with respect to any $\Theta \subseteq \Sigma$.
\end{Corollary}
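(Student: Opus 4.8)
The plan is to unfold Definition \ref{D1} directly and exploit the fact that a universally quantified condition is inherited by any subset of the index set. By definition, the assertion that $v^*$ is a partial sharpening of $v$ with respect to $\Sigma$ is nothing more than the statement that $v(p) \le_I v^*(p)$ holds for every $p \in \Sigma$, with both valuations defined over the same Boolean normal scheme $\mathbf{X}$. The goal is to derive the same statement with $\Sigma$ replaced by an arbitrary $\Theta \subseteq \Sigma$.

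The single step is then: fix an arbitrary $p \in \Theta$; since $\Theta \subseteq \Sigma$ by hypothesis, $p \in \Sigma$, so the assumption on $v^*$ yields $v(p) \le_I v^*(p)$. As $p$ was arbitrary, this inequality holds for all $p \in \Theta$, which is precisely the condition for $v^*$ to be a partial sharpening of $v$ with respect to $\Theta$. The clause of Definition \ref{D1} requiring $v$ and $v^*$ to share the scheme is automatically satisfied, since it is already part of the hypothesis that $v^*$ sharpens $v$ with respect to $\Sigma$.

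No induction, case analysis, or appeal to monotonicity of the scheme (Corollary \ref{cor:mono}) is needed, and so there is no genuine obstacle here: the corollary is purely the observation that the partial-sharpening relation is monotone under shrinking the reference set of atoms. The only point worth stating explicitly is this monotonicity itself, since it is invoked repeatedly in the positive-results section when one wants to restrict attention from the full set of atoms of a formula to the shared atoms $\mathit{At}(\phi) \cap \mathit{At}(\psi)$.
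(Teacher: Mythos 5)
Your proof is correct and is essentially the same argument as the paper's one-line proof: the defining condition is a universally quantified statement over $\Sigma$, and universal statements are inherited by subsets. Nothing further is needed.
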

\begin{proof} Note that if the condition holds for all $p_1 \in \Sigma$, it also holds for all $p_2 \in \Theta \subseteq \Sigma$.
\end{proof}

\begin{Corollary}\label{C2}
If $v^*$ is a partial sharpening of $v$ with respect to $\Sigma$ and $\Theta$, then $v^*$ is a partial sharpening of $v$ with respect to $\Sigma \cup \Theta$.
\end{Corollary}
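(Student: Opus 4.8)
The plan is to unfold Definition \ref{D1} and argue by a one-line case split on membership in the union. First I would fix an arbitrary atom $p \in \Sigma \cup \Theta$; by the definition of union, either $p \in \Sigma$ or $p \in \Theta$. In the former case, the hypothesis that $v^*$ is a partial sharpening of $v$ with respect to $\Sigma$ yields $v(p) \le_I v^*(p)$ directly from Definition \ref{D1}; in the latter case, the hypothesis that $v^*$ is a partial sharpening of $v$ with respect to $\Theta$ yields the same inequality. Since $p$ was arbitrary, $v(p) \le_I v^*(p)$ holds for every $p \in \Sigma \cup \Theta$, which is precisely the condition required by Definition \ref{D1} for $v^*$ to be a partial sharpening of $v$ with respect to $\Sigma \cup \Theta$.

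There is essentially no obstacle to overcome here: the defining clause of partial sharpening is a universally quantified condition ranging over atoms, and any such condition that holds separately over two index sets holds over their union. No appeal to Boolean normality, monotonicity, or the particular scheme $\mathbf{X}$ is needed beyond what is already packaged into Definition \ref{D1}. One may also remark that this result is complementary to Corollary \ref{C1}: the latter shows that the collection of sets with respect to which $v^*$ sharpens $v$ is closed downward, while the present corollary shows it is closed under (finite) unions, so together they characterize that collection as closed under taking subsets and unions.
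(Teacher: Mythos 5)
Your proposal is correct and coincides with the paper's own one-line argument: both simply unfold Definition \ref{D1} and observe that a universally quantified condition holding over $\Sigma$ and over $\Theta$ holds over $\Sigma \cup \Theta$. Nothing further is needed.
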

\begin{proof} Note that if the condition holds for all $p_1 \in \Sigma$ and for all $p_2 \in \Theta$, it also holds for all $p \in \Sigma \cup \Theta$.
\end{proof}

This brings us to the next lemma, which is intimately linked to the monotonicity of the schemes discussed in this paper. Given a formula $\phi$, if $\phi$ takes a classical value according to a valuation $v$, any valuation $v^*$ exactly like $v$ for all classical values assigned to the atoms of $\phi$ will agree with $v$ on the value of $\phi$.

\begin{Lemma}\label{L1}
Let $\mathbf{X}$ be a Boolean normal monotonic scheme. For all $\mathbf{X}$-valuations $v$, if $v(\phi) \neq \sfrac{1}{2}$, then for all partial sharpenings $v^*$ of $v$ with respect to $\mathit{At}(\phi)$, it holds that $v^*(\phi) = v(\phi)$.
\end{Lemma}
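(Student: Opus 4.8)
The plan is to prove Lemma \ref{L1} by leveraging Corollary \ref{cor:mono} together with the fact that the information order $\le_I$ has the two classical values $0$ and $1$ as maximal elements, with $\sfrac{1}{2}$ strictly below both. The key observation is that a partial sharpening $v^*$ of $v$ with respect to $\mathit{At}(\phi)$ is, by Definition \ref{D1}, precisely a valuation with $v(p) \le_I v^*(p)$ for all $p \in \mathit{At}(\phi)$, which is exactly the hypothesis of Corollary \ref{cor:mono}. Hence we immediately get $v(\phi) \le_I v^*(\phi)$.

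Next I would argue that $v(\phi) \le_I v^*(\phi)$ together with $v(\phi) \neq \sfrac{1}{2}$ forces $v^*(\phi) = v(\phi)$. This is the only genuine content of the argument, and it is a one-line observation about the order $\le_I$: the elements $0$ and $1$ are $\le_I$-maximal (nothing is strictly above them), since the only strict inequalities are $\sfrac{1}{2} <_I 0$ and $\sfrac{1}{2} <_I 1$. So if $v(\phi) \in \{0,1\}$ and $v(\phi) \le_I v^*(\phi)$, then $v^*(\phi)$ cannot be strictly above $v(\phi)$, hence $v^*(\phi) = v(\phi)$.

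There is no real obstacle here — the lemma is essentially a corollary of Corollary \ref{cor:mono} plus a triviality about the poset $(\mathcal{V}, \le_I)$. The only thing to be careful about is to state clearly why $0$ and $1$ are maximal in $\le_I$, so that the step from $v(\phi) \neq \sfrac{1}{2}$ and $v(\phi) \le_I v^*(\phi)$ to equality is transparent. I would write the proof as follows.

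\begin{proof}
Let $v^*$ be a partial sharpening of $v$ with respect to $\mathit{At}(\phi)$. By Definition \ref{D1}, $v(p) \le_I v^*(p)$ for all $p \in \mathit{At}(\phi)$, so by Corollary \ref{cor:mono}, $v(\phi) \le_I v^*(\phi)$. Now suppose $v(\phi) \neq \sfrac{1}{2}$, so that $v(\phi) \in \{0, 1\}$. Since the only instances of strict $\le_I$-inequality are $\sfrac{1}{2} <_I 0$ and $\sfrac{1}{2} <_I 1$, both $0$ and $1$ are maximal with respect to $\le_I$; in particular there is no value strictly above $v(\phi)$. Hence $v(\phi) \le_I v^*(\phi)$ implies $v^*(\phi) = v(\phi)$.
\end{proof}
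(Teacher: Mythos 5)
Your proof is correct and follows essentially the same route as the paper's: apply Corollary \ref{cor:mono} to get $v(\phi) \le_I v^*(\phi)$, then conclude from the fact that the classical values are $\le_I$-maximal. You merely spell out the maximality observation that the paper leaves implicit.
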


\begin{proof}
Assume $v(\phi) \neq \sfrac{1}{2}$. By the definition of $v^*$, $v(p) \le_I v^*(p)$ for all $p \in \mathit{At}(\phi)$, so by Corollary \ref{cor:mono}, $v(\phi) \le_I v^*(\phi)$ since $\mathbf{X}$ is a monotonic scheme. But $v(\phi) \neq \sfrac{1}{2}$ by assumption, so $v^*(\phi) = v(\phi)$.
\end{proof}

For any Boolean normal monotonic scheme $\mathbf{X}$ and any classically valid inference $\phi \Rightarrow \psi$, the strategy to construct the split interpolant typically runs along the following lines. First, we select all $\mathbf{X}$-valuations that satisfy $\phi$. Next, we construct a conjunction of literals consisting of the atoms of $\phi$ and $\psi$ that take a classical value according to a selected valuation. This valuation must also satisfy the resulting conjunction. The interpolant is then obtained by taking the disjunction of all such conjunctions. There is, nonetheless, a caveat. This strategy is not appropriate for schemes with either a non-commutative disjunction or a non-commutative conjunction. To see this, note for instance that
\begin{align*}
\models_{\mathsf{CL}} (p \wedge q) \lor (r \wedge \neg r) \Rightarrow (r \wedge p) \lor (\neg r \wedge q),
\end{align*}
and
\begin{align*}
\models_{\mathbf{LMK}_\wedge/\mathbf{SK}_\lor}^{\mathbf{ss}} (p \wedge q) \lor (r \wedge \neg r) \Rightarrow (p \wedge q) \lor (p \wedge q \wedge r) \lor (p \wedge q \wedge \neg r).
\end{align*}

\enlargethispage{\baselineskip}
\noindent
The conjunction $(p \wedge q)$ in the consequent corresponds to the valuation $v_1$ such that $v_1(p)=v_1(q) = 1$ and $v_1(r)= \sfrac{1}{2}$. The conjunction $(p \wedge q \wedge r)$ corresponds to the valuation $v_2$ such that $v_2(p) = v_2(q) = v_2(r) = 1$. The conjunction $(p \wedge q \wedge \neg r)$ corresponds to the valuation $v_3$ such that $v_3(p) = v_3(q) = 1$ and $v_3(r) = 0$. These three valuations encompass all the valuations that satisfy the antecedent. Yet, there is $v$ such that
\begin{align*}
v \not\models_{\mathbf{LMK}_\wedge/\mathbf{SK}_\lor}^{\mathbf{tt}} (p \wedge q) \lor (p \wedge q \wedge r) \lor (p \wedge q \wedge \neg r) \Rightarrow (r \wedge p) \lor (\neg r \wedge q),
\end{align*}
namely the valuation $v$ such that $v(p) = \sfrac{1}{2}$ and $v(q)=v(r)=0$. The formula $(p \wedge q) \lor (p \wedge q \wedge r) \lor (p \wedge q \wedge \neg r)$ is therefore not a suitable interpolant.\footnote{Commuting $p$ and $q$ won't be enough of a solution, because 
\begin{align*}
v' \not\models_{\mathbf{LMK}_\wedge/\mathbf{SK}_\lor}^{\mathbf{tt}} (q \wedge p) \lor (q \wedge p \wedge r) \lor (q \wedge p \wedge \neg r) \Rightarrow (r \wedge p) \lor (\neg r \wedge q),
\end{align*}
when $v'$ is such that $v(p)= 0$, $v(q)= \sfrac{1}{2}$ and $v(r)=1$.}

The primary technique we will employ to circumvent this limitation is as follows: if $\mathbf{X}$ is a scheme of the form $\mathbf{X}_\wedge/\mathbf{SK}_\lor$, any conjunction in the interpolant will be translated into its De Morgan equivalent. Similarly, for any scheme $\mathbf{X}$ of the form $\mathbf{SK}_\wedge/\mathbf{Y}_\lor$, any disjunction in the interpolant will be translated into its De Morgan equivalent.

\begin{Definition}\label{DefConj} Let $\phi, \psi \in \mathcal{L}$, $\mathbf{X}$ be a Boolean normal monotonic scheme, and $v$ an $\mathbf{X}$-valuation. If $v$ is such that $v(\phi)=1$ and $\mathit{At}(\phi) \cap \mathit{At}(\psi) \neq \emptyset$, let 

\begin{itemize}
 \setlength\itemsep{1em}
\item $C_{\phi, \psi}^{v} = \bigwedge_{v(\alpha) \neq \sfrac{1}{2}} \alpha^{\sim}$
\item $D_{\phi, \psi}^{v} = \neg \bigvee_{v(\alpha) \neq \sfrac{1}{2}} \neg \alpha^{\sim}$ 
\end{itemize}

\noindent
with $\alpha \in \mathit{At}(\phi) \cap \mathit{At}(\psi)$ and $\alpha^\sim=\alpha$ if $v(\alpha)=1$ and $\alpha^\sim=\neg \alpha$ if $v(\alpha)=0$.
\end{Definition}

\begin{Definition}\label{DefDNF}
    Given two formulas $\phi$ and $\psi$ such that $\mathit{At}(\phi) \cap \mathit{At}(\psi) \neq \emptyset$, let
    
    \begin{itemize}
     \setlength\itemsep{1em}
    \item $\mathtt{C}_{\phi, \psi} = \neg  \bigwedge \lbrace \neg C_{\phi, \psi}^v : v(\phi)=1\rbrace$
    \item $\mathtt{D}_{\phi, \psi} = \bigvee \lbrace D_{\phi, \psi}^v : v(\phi)=1 \rbrace$
    \end{itemize}

    \noindent
    when there is $v^*$ such that $v^*(\phi)=1$, and let $\mathtt{C}_{\phi, \psi} = \mathtt{D}_{\phi, \psi} = p \wedge \neg p$ for some $p \in \mathit{At}(\phi) \cap \mathit{At}(\psi)$ otherwise.
\end{Definition}

The next lemma will allow us to establish the first part of the main theorem of the subsection. Provided that $\phi$ and $\psi$ have some atom in common, it shows that for any Boolean normal monotonic scheme $\mathbf{X}$ of the form $\mathbf{SK}_\wedge/\mathbf{Y}_\lor$, $\mathbf{X}_\wedge/\mathbf{SK}_\lor$ or $\mathbf{WK}_\wedge/\mathbf{WK}_\lor$, the interpolant generated by $\phi$ and $\psi$ is logically equivalent to $\phi$.

\begin{Lemma}\label{lem:cd}
Let $\phi$ and $ \psi$ be two formulas such that $\mathit{At}(\phi) \cap \mathit{At}(\psi) \neq \emptyset$. Then
\begin{align*}
\models_{\mathbf{SK}_\wedge/\mathbf{Y}_\lor}^{\mathbf{ss}} \phi \Rightarrow \mathtt{C}_{\phi, \psi} \ &\text{and} \ \models_{\mathbf{SK}_\wedge/\mathbf{Y}_\lor}^{\mathbf{ss}} \mathtt{C}_{\phi, \psi} \Rightarrow \phi,\\
\models_{\mathbf{X}_\wedge/\mathbf{SK}_\lor}^{\mathbf{ss}} \phi \Rightarrow \mathtt{D}_{\phi, \psi} \ &\text{and} \ \models_{\mathbf{X}_\wedge/\mathbf{SK}_\lor}^{\mathbf{ss}} \mathtt{D}_{\phi, \psi} \Rightarrow \phi,\\
\models_{\mathbf{WK}_\wedge/\mathbf{WK}_\lor}^{\mathbf{ss}} \phi \Rightarrow \mathtt{D}_{\phi, \psi} \ &\text{and} \ \models_{\mathbf{WK}_\wedge/\mathbf{WK}_\lor}^{\mathbf{ss}} \mathtt{D}_{\phi, \psi} \Rightarrow \phi.\\
\end{align*}
\end{Lemma}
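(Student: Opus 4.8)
The plan is to prove all six entailments at once by the following observation: an inference $\alpha \Rightarrow \beta$ holds in an $\mathbf{ss}$‑logic iff every valuation sending $\alpha$ to $1$ also sends $\beta$ to $1$, so it suffices to show that, in each of the three schemes, $\phi$ and the relevant candidate ($\mathtt{C}_{\phi,\psi}$ in the first case, $\mathtt{D}_{\phi,\psi}$ in the other two) receive the value $1$ under exactly the same $\mathbf{X}$‑valuations. I would first dispose of the degenerate clause of Definition~\ref{DefDNF}: if no $v^{*}$ has $v^{*}(\phi)=1$, then $\mathtt{C}_{\phi,\psi}=\mathtt{D}_{\phi,\psi}=p\wedge\neg p$, and Boolean normality forces $f^{\wedge}(a,f^{\neg}a)=0$ for $a\in\{0,1\}$ while $f^{\wedge}(\sfrac{1}{2},\sfrac{1}{2})=\sfrac{1}{2}$, so $p\wedge\neg p$ never takes the value $1$; since $\phi$ also never takes the value $1$ here, all six $\mathbf{ss}$‑entailments hold vacuously. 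From now on assume some $v^{*}(\phi)=1$, so the first clause of Definition~\ref{DefDNF} applies, and recall that each $C^{u}_{\phi,\psi}$ (resp.\ $D^{u}_{\phi,\psi}$) is built only from $\neg,\wedge$ (resp.\ $\neg,\lor$) out of literals, one literal $p$ or $\neg p$ for each atom of $\phi$ that $u$ sends to a classical value; this is precisely the point of the De Morgan wrapping in Definitions~\ref{DefConj}--\ref{DefDNF}, which keeps the whole construction inside the well‑behaved fragment of the scheme and away from the possibly non‑commutative connective.

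For the scheme $\mathbf{SK}_\wedge/\mathbf{Y}_\lor$ and $\mathtt{C}_{\phi,\psi}$: here $f^{\neg}x=1-x$ and $f^{\wedge}=\min$, so a direct computation gives $v(\mathtt{C}_{\phi,\psi})=\max\{\,v(C^{u}_{\phi,\psi}):u(\phi)=1\,\}$ and $v(C^{u}_{\phi,\psi})=\min\{\,v(\alpha^{\sim}):\alpha\in\mathit{At}(\phi),\ u(\alpha)\neq\sfrac{1}{2}\,\}$ (both sets are nonempty: since $u(\phi)=1$, monotonicity rules out $u$ being constantly $\sfrac{1}{2}$ on $\mathit{At}(\phi)$). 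The crux is then the partial‑sharpening characterisation: unpacking $\alpha^{\sim}$ and the order $\le_I$, one sees that $v(\alpha^{\sim})=1$ exactly when $u(\alpha)\le_I v(\alpha)$ (for $\alpha$ with $u(\alpha)\neq\sfrac{1}{2}$), hence $v(C^{u}_{\phi,\psi})=1$ iff $v$ is a partial sharpening of $u$ with respect to $\mathit{At}(\phi)$ in the sense of Definition~\ref{D1}. With this, $\models^{\mathbf{ss}}\phi\Rightarrow\mathtt{C}_{\phi,\psi}$ follows by taking $u:=v$ when $v(\phi)=1$ (a valuation trivially sharpens itself), and $\models^{\mathbf{ss}}\mathtt{C}_{\phi,\psi}\Rightarrow\phi$ follows because $v(\mathtt{C}_{\phi,\psi})=1$ yields a $u$ with $u(\phi)=1$ for which $v$ is a partial sharpening with respect to $\mathit{At}(\phi)$, so Lemma~\ref{L1} (applicable since $u(\phi)=1\neq\sfrac{1}{2}$) gives $v(\phi)=u(\phi)=1$. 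The scheme $\mathbf{X}_\wedge/\mathbf{SK}_\lor$ with $\mathtt{D}_{\phi,\psi}$ is identical: now $\mathtt{D}_{\phi,\psi}$ uses only $\neg$ and $f^{\lor}=\max$, and the De Morgan identities turn the same computation into $v(\mathtt{D}_{\phi,\psi})=\max\{v(D^{u}_{\phi,\psi}):u(\phi)=1\}$ with $v(D^{u}_{\phi,\psi})=\min\{v(\alpha^{\sim}):\alpha\in\mathit{At}(\phi),\ u(\alpha)\neq\sfrac{1}{2}\}$, after which the previous paragraph applies verbatim.

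For $\mathbf{WK}_\wedge/\mathbf{WK}_\lor$ with $\mathtt{D}_{\phi,\psi}$ the formula again uses only $\neg$ and the (weak) disjunction, but now $f^{\lor}(x,y)=\sfrac{1}{2}$ as soon as one argument is $\sfrac{1}{2}$, so I would first record the easy fact (by induction on weak Kleene valuations) that $v(\theta)\neq\sfrac{1}{2}$ iff $v(p)\neq\sfrac{1}{2}$ for every $p\in\mathit{At}(\theta)$. Using this together with $f^{\neg}x=1-x$, one checks that $v(D^{u}_{\phi,\psi})=\sfrac{1}{2}$ if some $\alpha\in\mathit{At}(\phi)$ with $u(\alpha)\neq\sfrac{1}{2}$ has $v(\alpha)=\sfrac{1}{2}$, and is otherwise the classical value that equals $1$ exactly when $v$ agrees with $u$ on all such $\alpha$. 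Then $\models^{\mathbf{ss}}\phi\Rightarrow\mathtt{D}_{\phi,\psi}$: if $v(\phi)=1$ then $v$ is classical on all of $\mathit{At}(\phi)$, so no $D^{u}_{\phi,\psi}$ can be $\sfrac{1}{2}$ under $v$ and $D^{v}_{\phi,\psi}$ is $1$, whence $v(\mathtt{D}_{\phi,\psi})=1$; conversely, $v(\mathtt{D}_{\phi,\psi})=1$ forces $v$ classical on $\mathit{At}(\mathtt{D}_{\phi,\psi})\subseteq\mathit{At}(\phi)$ and some $D^{u}_{\phi,\psi}$ with $u(\phi)=1$ equal to $1$ under $v$, so $v$ agrees with $u$ on $\mathit{At}(\phi)$ (using $u(\phi)=1\Rightarrow u$ classical there), and Lemma~\ref{L1} gives $v(\phi)=u(\phi)=1$.

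I expect no deep obstacle: the one genuine idea is the partial‑sharpening characterisation of satisfaction of $C^{u}_{\phi,\psi}$ and $D^{u}_{\phi,\psi}$, after which Lemma~\ref{L1} closes both directions uniformly. The care needed is organisational: (i) verifying that the De Morgan translations really do confine the construction to the $\{\neg,\wedge\}$ or $\{\neg,\lor\}$ fragment of each scheme, where the value computations $\max/\min$ are valid, and (ii) in the weak Kleene case, separately bookkeeping the infectious $\sfrac{1}{2}$, since there disjunction is not $\max$ and the three output values of $D^{u}_{\phi,\psi}$ must be distinguished by hand.
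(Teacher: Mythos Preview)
Your overall strategy coincides with the paper's: reduce both directions to the observation that $v(C^{u}_{\phi,\psi})=1$ (resp.\ $v(D^{u}_{\phi,\psi})=1$) characterises $v$ as a partial sharpening of $u$, and then invoke Lemma~\ref{L1}. For the forward entailments $\phi\Rightarrow\mathtt{C}_{\phi,\psi}$ and $\phi\Rightarrow\mathtt{D}_{\phi,\psi}$ this works and matches the paper's argument.

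The backward entailments, however, contain a genuine gap---one that is in fact shared by the paper's own proof. In Definition~\ref{DefConj} the index $\alpha$ ranges over $\mathit{At}(\phi)\cap\mathit{At}(\psi)$, not over all of $\mathit{At}(\phi)$ as you write. Consequently $v(C^{u}_{\phi,\psi})=1$ only certifies that $v$ is a partial sharpening of $u$ with respect to $\mathit{At}(\phi)\cap\mathit{At}(\psi)$, which is generally too weak to apply Lemma~\ref{L1} to $\phi$. Concretely, take $\phi=p\wedge q$ and $\psi=p$: every $u$ with $u(\phi)=1$ has $u(p)=u(q)=1$, so each $C^{u}_{\phi,\psi}=p$ and $\mathtt{C}_{\phi,\psi}$ is $\mathbf{SK}_\wedge$-equivalent to $p$; but the valuation $v'(p)=1$, $v'(q)=0$ gives $v'(\mathtt{C}_{\phi,\psi})=1$ while $v'(\phi)=0$, so $\not\models_{\mathbf{SK}_\wedge/\mathbf{Y}_\lor}^{\mathbf{ss}}\mathtt{C}_{\phi,\psi}\Rightarrow\phi$. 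The same defect hits the $\mathtt{D}$-clauses and your weak Kleene case (the step ``$v$ agrees with $u$ on $\mathit{At}(\phi)$'' ignores atoms in $\mathit{At}(\phi)\setminus\mathit{At}(\psi)$), and your nonemptiness remark likewise rests on the misreading, since $u(\phi)=1$ does not prevent $u$ from being $\sfrac{1}{2}$ on every atom of $\mathit{At}(\phi)\cap\mathit{At}(\psi)$. In short, the right-to-left halves of the lemma are false as stated and cannot be salvaged; only the forward halves are correct, and those are all that Theorem~\ref{thm:psds} actually uses.
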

\begin{proof}
For $\models_{\mathbf{SK}_\wedge/\mathbf{Y}_\lor}^{\mathbf{ss}} \phi \Rightarrow \mathtt{C}_{\phi, \psi}$, assume $v(\phi)=1$. Then $C_{\phi, \psi}^v$ is defined since $\mathit{At}(\phi) \cap \mathit{At}(\psi) \neq \emptyset$. By construction and by the $\mathbf{X}$ evaluation of a conjunction, $v(C_{\phi, \psi}^v)=1$. Now, by the $\mathbf{SK}_{\wedge}$ truth table, $v(\mathtt{C}_{\phi, \psi}^v)=1$ as well.  The other left-side validities can be treated similarly.

Turning to $\models_{\mathbf{SK}_\wedge/\mathbf{Y}_\lor}^{\mathbf{ss}} \mathtt{C}_{\phi, \psi} \Rightarrow \phi$, if there is a $v'$ such that $v'(\mathtt{C}_{\phi, \psi})=1$, then for some $v$ such that $v(\phi)=1$, we have $v'(C_{\phi, \psi}^v)=1$ given the $\mathbf{SK}$ truth table for conjunction. Then $v'$ must agree on the atoms of $\phi$ to which $v$ gives a classical value, and possibly differ on the atoms of $\phi$ to which $v$ gives the value $\sfrac{1}{2}$. Thus, $v'$ is a partial sharpening of $v$ with respect to $\mathit{At}(\phi)$, and therefore, since $v(\phi)=1$, we also have $v'(\phi)=1$ by Lemma \ref{L1}. By analogous reasoning, we can prove that $\models_{\mathbf{X}_\wedge/\mathbf{SK}_\lor}^{\mathbf{ss}} \mathtt{D}_{\phi, \psi} \Rightarrow \phi$ and $\models_{\mathbf{WK}_\wedge/\mathbf{WK}_\lor}^{\mathbf{ss}} \mathtt{D}_{\phi, \psi} \Rightarrow \phi$.
\end{proof}

Although the split interpolation property requires that for any interpolated inference $\phi \Rightarrow \psi$, we have $\mathit{At}(\phi) \cap \mathit{At}(\psi) \neq \emptyset$, it is still possible for $\phi$ to be a contradiction or $\psi$ a tautology. For example, the inference $p \wedge \neg p \Rightarrow p \lor \neg p$ is classically valid, and $\mathit{At}(p \wedge \neg p) \cap \mathit{At}(p \lor \neg p) \neq \emptyset$. It is straightforward to verify that the case where $\psi$ is a tautology is already handled by Lemma \ref{lem:cd}, since either $\mathtt{C}_{\phi, \psi}$ or $\mathtt{D}_{\phi, \psi}$ is entailed by $\phi$ and, in turn, entails the tautology $\psi$ for any $\mathbf{tt}$-consequence relation based on a Boolean normal monotonic scheme. However, when $\phi$ is a contradiction, there is no valuation $v$ such that $v(\phi) = 1$, making both $\mathtt{C}_{\phi, \psi}$ and $\mathtt{D}_{\phi, \psi}$ unsuitable as interpolants. In what follows, we show how to construct an appropriate interpolant for this scenario.

\begin{Definition}\label{DefDij}
Let $\phi, \psi \in \mathcal{L}$, $\mathbf{X}$ be a Boolean normal monotonic scheme, and $v$ an $\mathbf{X}$-valuation. If $v$ is such that $v(\psi)=0$ and $\mathit{At}(\phi) \cap \mathit{At}(\psi) \neq 0$, let 

\begin{itemize}
\setlength\itemsep{1em}
\item $E_{\phi, \psi}^{v} = \bigvee_{v(\alpha) \neq \sfrac{1}{2}} \alpha^{\sim}$
\item $F_{\phi, \psi}^{v} = \neg \bigwedge_{v(\alpha) \neq \sfrac{1}{2}} \neg \alpha^{\sim}$ 
\end{itemize}

\noindent
with $\alpha \in \mathit{At}(\phi) \cap \mathit{At}(\psi)$ and $\alpha^\sim=\alpha$ if $v(\alpha)=0$ and $\alpha^\sim=\neg \alpha$ if $v(\alpha)=1$.
\end{Definition}

\begin{Definition}\label{DefCNF}
    Given two formulas $\phi$ and $\psi$ such that $\mathit{At}(\phi) \cap \mathit{At}(\psi) \neq \emptyset$, let
    
\begin{itemize}
 \setlength\itemsep{1em}
    \item $\mathtt{E}_{\phi, \psi} = \neg \bigvee \lbrace \neg E_{\phi, \psi}^v :v(\psi)=0 \rbrace$
    \item $\mathtt{F}_{\phi, \psi} = \bigwedge \lbrace F_{\phi, \psi}^v : v(\psi)=0 \rbrace$
    \end{itemize}

    \noindent
    when there is $v^*$ such that $v^*(\psi)=0$, and let $\mathtt{F}_{\phi, \psi} = \mathtt{E}_{\phi, \psi} = p \lor \neg p$ for some $p \in \mathit{At}(\phi) \cap \mathit{At}(\psi)$ otherwise.
\end{Definition}

\begin{Lemma}\label{lem:ef}
Let $\phi$ and $ \psi$ be two formulas such that $\mathit{At}(\phi) \cap \mathit{At}(\psi) \neq \emptyset$. Then
\begin{align*}
\models_{\mathbf{X}_\wedge/\mathbf{SK}_\lor}^{\mathbf{tt}} \mathtt{E}_{\phi, \psi} \Rightarrow \psi,\\
\models_{\mathbf{SK}_\wedge/\mathbf{Y}_\lor}^{\mathbf{tt}} \mathtt{F}_{\phi, \psi} \Rightarrow \psi,\\
\models_{\mathbf{WK}_\wedge/\mathbf{WK}_\lor}^{\mathbf{tt}} \mathtt{E}_{\phi, \psi} \Rightarrow \psi.
\end{align*}
\end{Lemma}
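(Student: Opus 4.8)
The plan is to prove the three claimed $\mathbf{tt}$-validities by exploiting the duality between Lemma~\ref{lem:ef} and Lemma~\ref{lem:cd}: where Lemma~\ref{lem:cd} handled the antecedent side of the interpolation via conjunctions/disjunctions of literals built from valuations satisfying $\phi$, Lemma~\ref{lem:ef} handles the consequent side via the dual construction built from valuations falsifying $\psi$. Concretely, one notes that $E_{\phi,\psi}^v$ is the De Morgan dual of $C_{\phi,\psi}^v$ (a disjunction of literals rather than a conjunction), $F_{\phi,\psi}^v$ the dual of $D_{\phi,\psi}^v$, and that ``$v(\psi)=0$'' plays the role ``$v(\phi)=1$'' played before — the negation-symmetry of the Boolean normal monotonic schemes and the swap $\mathbf{ss}\leftrightarrow\mathbf{tt}$ (which is a negation-duality of consequence relations) should turn each line of Lemma~\ref{lem:cd} into the corresponding line here. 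So the first move is to make this dualization precise: replacing each formula by the negation of its dual and each valuation $v$ by the valuation $\bar v$ with $\bar v(p)=f^{\neg}v(p)$, and checking that the schemes of the form $\mathbf{SK}_\wedge/\mathbf{Y}_\lor$, $\mathbf{X}_\wedge/\mathbf{SK}_\lor$, $\mathbf{WK}_\wedge/\mathbf{WK}_\lor$ are closed under this operation in the right way (the $\mathbf{SK}$ conjunction dualizes to the $\mathbf{SK}$ disjunction, $\mathbf{WK}$ to $\mathbf{WK}$, etc.).

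If one prefers a direct argument rather than invoking duality wholesale, I would proceed as follows for, say, the first line $\models_{\mathbf{X}_\wedge/\mathbf{SK}_\lor}^{\mathbf{tt}} \mathtt{E}_{\phi,\psi} \Rightarrow \psi$. Take any $\mathbf{X}_\wedge/\mathbf{SK}_\lor$-valuation $v'$ and suppose $v'(\psi)\ne 0$; we must show $v'(\mathtt{E}_{\phi,\psi})\ne 0$, i.e.\ that $\mathtt{E}_{\phi,\psi} = \neg\bigvee\{\neg E_{\phi,\psi}^v : v(\psi)=0\}$ is not false under $v'$, equivalently that $\bigvee\{\neg E_{\phi,\psi}^v : v(\psi)=0\}$ is not true under $v'$, equivalently — using the $\mathbf{SK}_\lor$ truth table, under which a disjunction is $1$ iff some disjunct is — that $v'(\neg E_{\phi,\psi}^v)\ne 1$ for every $v$ with $v(\psi)=0$, i.e.\ $v'(E_{\phi,\psi}^v)\ne 0$ for each such $v$. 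So fix $v$ with $v(\psi)=0$ and suppose toward a contradiction $v'(E_{\phi,\psi}^v)=0$. Since $E_{\phi,\psi}^v$ is the disjunction of the literals $\alpha^\sim$ over atoms $\alpha\in\mathit{At}(\phi)\cap\mathit{At}(\psi)$ with $v(\alpha)\ne\sfrac12$, and a disjunction is $0$ (in any Boolean normal monotonic scheme) only if every disjunct is $0$, each literal $\alpha^\sim$ gets value $0$ under $v'$; reading off the definition of $\alpha^\sim$, this forces $v'(\alpha)=v(\alpha)$ for every shared atom on which $v$ is classical. Hence $v'$ is a partial sharpening of $v$ with respect to $\mathit{At}(\psi)$ (it agrees with $v$ on all the atoms of $\psi$ that $v$ makes classical — note that atoms of $\psi$ not shared with $\phi$ are unconstrained, which is exactly what ``partial sharpening'' permits), so by Lemma~\ref{L1}, since $v(\psi)=0\ne\sfrac12$, we get $v'(\psi)=v(\psi)=0$, contradicting $v'(\psi)\ne 0$. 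This is essentially the mirror image of the second-half argument in the proof of Lemma~\ref{lem:cd}.

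For the second line $\models_{\mathbf{SK}_\wedge/\mathbf{Y}_\lor}^{\mathbf{tt}} \mathtt{F}_{\phi,\psi} \Rightarrow \psi$, the same template applies with conjunctions in place of disjunctions: $\mathtt{F}_{\phi,\psi}=\bigwedge\{F_{\phi,\psi}^v : v(\psi)=0\}$, and for $v'$ with $v'(\mathtt{F}_{\phi,\psi})=1$ one uses the $\mathbf{SK}_\wedge$ table (a conjunction is $1$ iff every conjunct is $1$) to get $v'(F_{\phi,\psi}^v)=1$ for every $v$ with $v(\psi)=0$; then $F_{\phi,\psi}^v=\neg\bigwedge\neg\alpha^\sim$ being $1$ forces $\bigwedge\neg\alpha^\sim$ to be $0$, hence (conjunction $0$ $\Rightarrow$ some conjunct $0$) some $\neg\alpha^\sim$ is $0$ — wait, here one only gets \emph{some} conjunct, so the argument must instead show $v'(\psi)\ne 0$ for the $\mathbf{tt}$ side, which is the weaker obligation, and one runs it contrapositively exactly as above: assuming $v'(\psi)\ne 0$ one shows each $F_{\phi,\psi}^v$ is not true, i.e.\ $\bigwedge\neg\alpha^\sim$ is not $0$, i.e.\ no $\neg\alpha^\sim$ is $0$, i.e.\ no $\alpha^\sim$ is $1$, forcing agreement of $v'$ with $v$ on the classical shared atoms and hence, by Lemma~\ref{L1}, $v'(\psi)=0$, contradiction. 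The third line, over $\mathbf{WK}_\wedge/\mathbf{WK}_\lor$, is handled by the $\mathbf{E}$-construction as in line one, using that in the weak Kleene scheme a disjunction is $0$ only if both disjuncts are $0$ (true of all Boolean normal schemes), so the ``$v'$ agrees with $v$ on shared classical atoms'' step survives verbatim.

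I expect the main obstacle to be purely bookkeeping rather than conceptual: one must be careful that the literal-valuation correspondence $\alpha^\sim\mapsto v'(\alpha)$ is read off correctly in the \emph{dual} direction (here $\alpha^\sim=\alpha$ when $v(\alpha)=0$, the opposite convention from Definition~\ref{DefConj}), and that ``partial sharpening with respect to $\mathit{At}(\psi)$'' is invoked with the right atom set — since the literals only range over $\mathit{At}(\phi)\cap\mathit{At}(\psi)$, one first gets a partial sharpening with respect to that intersection and then, because the remaining atoms of $\psi$ are simply unconstrained, upgrades (trivially, or via Corollary~\ref{C2} applied with the complementary atom set on which every sharpening condition holds vacuously) to a partial sharpening with respect to all of $\mathit{At}(\psi)$, which is what Lemma~\ref{L1} needs. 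The other subtlety is the degenerate case where no $v$ satisfies $v(\psi)=0$ (i.e.\ $\psi$ is a $\mathbf{tt}$-tautology): then by Definition~\ref{DefCNF} the relevant formula is $p\lor\neg p$, and $\models^{\mathbf{tt}} p\lor\neg p \Rightarrow \psi$ holds because $\psi$ is never false and in fact is a tautology — this should be dispatched in one line at the start of the proof, exactly as the dual degenerate case was for Lemma~\ref{lem:cd}.
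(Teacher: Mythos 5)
There is a genuine gap here, and it begins with the direction of the implication. For the $\mathbf{tt}$-validity of $\mathtt{E}_{\phi,\psi}\Rightarrow\psi$ you must show that the \emph{premise} being non-false forces the \emph{conclusion} to be non-false, i.e.\ $v'(\mathtt{E}_{\phi,\psi})\neq 0$ implies $v'(\psi)\neq 0$, or equivalently its contrapositive: $v'(\psi)=0$ implies $v'(\mathtt{E}_{\phi,\psi})=0$. You instead assume $v'(\psi)\neq 0$ and try to derive $v'(\mathtt{E}_{\phi,\psi})\neq 0$; that is the converse, not the contrapositive, and it does not yield the lemma. Worse, the converse is false: take $\phi=p$ and $\psi=p\lor q$, so the only pattern falsifying $\psi$ is $v(p)=v(q)=0$, giving $E^v_{\phi,\psi}=p$ and $\mathtt{E}_{\phi,\psi}=\neg\neg p$; the valuation $v'$ with $v'(p)=0$ and $v'(q)=1$ has $v'(\psi)=1$ but $v'(\mathtt{E}_{\phi,\psi})=0$. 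The step of your argument that breaks on this example is exactly the one you dismissed as bookkeeping: from $v'(E^v_{\phi,\psi})=0$ you only learn that $v'$ agrees with $v$ on the classical atoms of $\mathit{At}(\phi)\cap\mathit{At}(\psi)$, so $v'$ is a partial sharpening of $v$ with respect to that intersection only. On an atom of $\psi$ outside $\phi$ (here $q$), $v'$ may flip a classical value of $v$ to the \emph{other} classical value, and Definition \ref{D1} does not permit this ($0$ and $1$ are $\le_I$-incomparable), so the "vacuous upgrade" to a partial sharpening with respect to $\mathit{At}(\psi)$ fails and Lemma \ref{L1} cannot be applied to $\psi$. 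The same reversal infects your treatment of the $\mathtt{F}_{\phi,\psi}$ line and the weak Kleene line.

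The lemma needs none of this machinery; the correct argument is the one-step contrapositive. Suppose $v(\psi)=0$. Then $v$ is one of the indexing valuations, and by construction every literal $\alpha^\sim$ of $E^v_{\phi,\psi}$ is false under $v$ (if $v(\alpha)=0$ then $\alpha^\sim=\alpha$; if $v(\alpha)=1$ then $\alpha^\sim=\neg\alpha$), so $v(E^v_{\phi,\psi})=0$ and $v(\neg E^v_{\phi,\psi})=1$. By the $\mathbf{SK}_\lor$ table the outer disjunction $\bigvee\lbrace \neg E^{v''}_{\phi,\psi}: v''(\psi)=0\rbrace$ then takes value $1$ under $v$, whence $v(\mathtt{E}_{\phi,\psi})=0$. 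The key point is that $E^v_{\phi,\psi}$ is evaluated at $v$ itself rather than related to a second valuation via sharpening; the partial-sharpening apparatus belongs to the $\mathbf{ss}$ side (Lemma \ref{lem:cd}) and to the main theorem, not here. Your handling of the degenerate case where no valuation falsifies $\psi$ is fine.
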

\begin{proof}
    For $\models_{\mathbf{X}_\wedge/\mathbf{SK}_\lor}^{\mathbf{tt}} \mathtt{E}_{\phi, \psi} \Rightarrow \psi$, assume $v(\psi)=0$. Then $E_{\phi, \psi}^v$ is defined since $\mathit{At}(\phi) \cap \mathit{At}(\psi) \neq \emptyset$. By construction and by the $\mathbf{X}$ evaluation of a disjunction, $v(E_{\phi, \psi}^v)=0$. Now, given the $\mathbf{SK_{\lor}}$ truth tables, $v(\mathtt{E}_{\phi, \psi}^v)=0$ as well. The other validities can be treated similarly.
\end{proof}

With these definitions and lemmas at hand, we can now state the main result of the subsection. 

\begin{Theorem}\label{thm:psds}
If a scheme $\mathbf{X}$ is a scheme of the form $\mathbf{SK}_{\wedge}/\mathbf{Y}_{\lor}$ or $\mathbf{X}_{\wedge}/\mathbf{SK}_{\lor}$ or $\mathbf{WK}_{\wedge}/\mathbf{WK}_{\lor}$ for some $\mathbf{X}_{\wedge}$ and $\mathbf{Y}_{\lor}$, then it satisfies the following split interpolation property: 

\vspace{7pt}
If $\mathit{At}(\phi) \cap \mathit{At}(\psi) \neq \emptyset$ and $\models_{\mathbf{X}}^{\mathbf{st}} \phi \Rightarrow \psi$, then there is a formula $\chi$ such that each of its atoms is in $\phi$ and $\psi$, and $\models_{\mathbf{X}}^{\mathbf{ss}} \phi \Rightarrow \chi$ and $\models_{\mathbf{X}}^{\mathbf{tt}} \chi \Rightarrow \psi$.
\end{Theorem}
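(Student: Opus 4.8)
The plan is to build the interpolant $\chi$ by cases on whether $\phi$ takes the value $1$ under some $\mathbf{X}$-valuation, recycling the formulas of Definitions \ref{DefDNF} and \ref{DefCNF}. Fix the pair to use for each of the three scheme families: if $\mathbf{X}$ has the form $\mathbf{SK}_{\wedge}/\mathbf{Y}_{\lor}$, put $\chi_1 = \mathtt{C}_{\phi,\psi}$ and $\chi_2 = \mathtt{F}_{\phi,\psi}$; if $\mathbf{X}$ has the form $\mathbf{X}_{\wedge}/\mathbf{SK}_{\lor}$ or is $\mathbf{WK}_{\wedge}/\mathbf{WK}_{\lor}$, put $\chi_1 = \mathtt{D}_{\phi,\psi}$ and $\chi_2 = \mathtt{E}_{\phi,\psi}$. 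In every case $\mathit{At}(\chi_1),\mathit{At}(\chi_2)\subseteq\mathit{At}(\phi)\cap\mathit{At}(\psi)$ by construction; moreover $\chi_1$ uses, besides $\neg$, only $\wedge$ when $\mathbf{X}=\mathbf{SK}_{\wedge}/\mathbf{Y}_{\lor}$ and only $\lor$ otherwise, so it is computed entirely by the Strong Kleene operations in the first two families and by the commutative, $\sfrac{1}{2}$-infectious Weak Kleene operations in the last. By Theorem \ref{thm:dare} the hypothesis $\models_{\mathbf{X}}^{\mathbf{st}}\phi\Rightarrow\psi$ is exactly classical validity, which I will use in the contrapositive form: no $\mathbf{X}$-valuation $w$ has $w(\phi)=1$ and $w(\psi)=0$.

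If $\phi$ takes the value $1$ under no $\mathbf{X}$-valuation, set $\chi=\chi_2$: then $\models_{\mathbf{X}}^{\mathbf{ss}}\phi\Rightarrow\chi_2$ holds vacuously (no valuation puts $\phi$ into $\mathbf{s}=\lbrace 1\rbrace$), and $\models_{\mathbf{X}}^{\mathbf{tt}}\chi_2\Rightarrow\psi$ is an instance of Lemma \ref{lem:ef}, so $\chi_2$ is a split interpolant. Otherwise some $v^*$ has $v^*(\phi)=1$, so $\chi_1$ is given by the main clause of Definition \ref{DefDNF}; set $\chi=\chi_1$. Since $\models_{\mathbf{X}}^{\mathbf{ss}}\phi\Rightarrow\chi_1$ is half of Lemma \ref{lem:cd}, the entire task reduces to proving $\models_{\mathbf{X}}^{\mathbf{tt}}\chi_1\Rightarrow\psi$, which is the core of the argument.

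To prove it, I would suppose towards a contradiction that an $\mathbf{X}$-valuation $v'$ has $v'(\chi_1)\in\mathbf{t}$ and $v'(\psi)=0$. The crucial step is to extract from $v'(\chi_1)\in\mathbf{t}$ an $\mathbf{X}$-valuation $v_0$ with $v_0(\phi)=1$ that is $\le_I$-compatible with $v'$ on the shared atoms, i.e.\ such that for every $\alpha\in\mathit{At}(\phi)\cap\mathit{At}(\psi)$ either $v_0(\alpha)=\sfrac{1}{2}$, or $v_0(\alpha)$ is classical and $v'(\alpha)\le_I v_0(\alpha)$ (that is, $v'(\alpha)\in\lbrace v_0(\alpha),\sfrac{1}{2}\rbrace$). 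In the Strong Kleene families this comes from unwinding the $\mathit{max}/\mathit{min}$ evaluation: $v'(\chi_1)$ is the maximum, over valuations $v$ with $v(\phi)=1$, of the minimum, over shared atoms $\alpha$ classical under $v$, of $v'(\alpha^{\sim})$ (notation of Definition \ref{DefConj}); so $v'(\chi_1)\in\mathbf{t}$ produces a $v_0$ with $v_0(\phi)=1$ and $v'(\alpha^{\sim})\neq 0$ for every shared $\alpha$ classical under $v_0$, which is exactly the displayed condition. In the Weak Kleene case, $\mathit{At}(\chi_1)\subseteq\mathit{At}(\psi)$ and the infectiousness of $\sfrac{1}{2}$ rule out $v'(\chi_1)=\sfrac{1}{2}$ (else $v'(\psi)=\sfrac{1}{2}$), so $v'(\chi_1)=1$ and $v'$ is classical on every shared atom; unwinding the infectious $\lor$ and $\neg$ then gives $v_0$ with $v_0(\phi)=1$ and $v'(\alpha^{\sim})=1$, i.e.\ $v'(\alpha)=v_0(\alpha)$, for every shared $\alpha$ classical under $v_0$ (the degenerate subcase in which no shared atom is classical under $v_0$ is absorbed by the compatibility clause and causes no trouble). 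Now glue $v_0$ and $v'$: let $w$ agree with $v_0$ on $\mathit{At}(\phi)\setminus\mathit{At}(\psi)$ and on the shared atoms where $v_0$ is classical, and with $v'$ on $\mathit{At}(\psi)\setminus\mathit{At}(\phi)$ and on the shared atoms where $v_0(\alpha)=\sfrac{1}{2}$. By compatibility, $v_0(\alpha)\le_I w(\alpha)$ for all $\alpha\in\mathit{At}(\phi)$, so $w$ is a partial sharpening of $v_0$ with respect to $\mathit{At}(\phi)$ and Lemma \ref{L1} yields $w(\phi)=v_0(\phi)=1$; and $v'(\alpha)\le_I w(\alpha)$ for all $\alpha\in\mathit{At}(\psi)$, so $w$ is a partial sharpening of $v'$ with respect to $\mathit{At}(\psi)$ and Lemma \ref{L1} yields $w(\psi)=v'(\psi)=0$. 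This contradicts the classical validity of $\phi\Rightarrow\psi$, so $\models_{\mathbf{X}}^{\mathbf{tt}}\chi_1\Rightarrow\psi$ and $\chi_1$ is a split interpolant.

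I expect the extraction of the compatible witness $v_0$ to be the real difficulty: everything downstream of it is routine monotonicity bookkeeping with partial sharpenings, but reading the correct relation between $v_0$ and $v'$ off $v'(\chi_1)\in\mathbf{t}$ requires $\chi_1$ to sit in a clean normal form built from $\neg$ and a single binary connective that is either a Strong Kleene operation or $\sfrac{1}{2}$-infectious. This is precisely where the restriction on $\mathbf{X}$ is used: the De Morgan rewriting in the normal forms of Definitions \ref{DefConj}--\ref{DefCNF} is essential, since the naive disjunctive normal form already fails for $\mathbf{LMK}_{\wedge}/\mathbf{SK}_{\lor}$ (see the example before Definition \ref{DefConj}), and it rescues only those schemes possessing a Strong Kleene $\wedge$ or $\lor$, together with the doubly infectious $\mathbf{WK}_{\wedge}/\mathbf{WK}_{\lor}$.
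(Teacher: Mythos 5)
Your proposal is correct and follows essentially the same route as the paper: the same limit-case/main-case split with the same interpolants $\mathtt{C}_{\phi,\psi}$, $\mathtt{D}_{\phi,\psi}$, $\mathtt{E}_{\phi,\psi}$, $\mathtt{F}_{\phi,\psi}$, Lemmas \ref{lem:cd} and \ref{lem:ef} for the easy halves, and for $\models_{\mathbf{X}}^{\mathbf{tt}}\chi\Rightarrow\psi$ a reductio that extracts a witness $v_0$ with $v_0(\phi)=1$ from $v'(\chi)\in\mathbf{t}$ and glues $v_0$ with $v'$ into a valuation refuting $\models_{\mathbf{X}}^{\mathbf{st}}\phi\Rightarrow\psi$ via partial sharpenings and Lemma \ref{L1}. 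Your max/min phrasing of the extraction and your direct infectiousness argument for $\mathbf{WK}_{\wedge}/\mathbf{WK}_{\lor}$ are only cosmetic variants of the paper's case distinction on $v(\mathtt{C}_{\phi,\psi})$ and its reduction of the Weak Kleene case to the Strong Kleene one.
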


\begin{proof}
Assume that $\models_{\mathbf{X}}^{\mathbf{st}} \phi \Rightarrow \psi$. We first show that the property holds for any scheme $\mathbf{X}=\mathbf{SK}_{\wedge}/\mathbf{Y}_{\lor}$. We start with the limit case. If there is no $v$ such that $v(\phi)=1$. We take as interpolant $\chi:=\mathtt{F}_{\phi, \psi}$. Obviously $\models_{\mathbf{X}}^{\mathbf{ss}} \phi \Rightarrow \mathtt{F}_{\phi, \psi}$ and $\models_{\mathbf{X}}^{\mathbf{tt}} \mathtt{F}_{\phi, \psi} \Rightarrow \psi$ by Lemma \ref{lem:ef}. 

Addressing the main case, in which there is a $v$ such that $v(\phi)=1$ let $\chi:=\mathtt{C}_{\phi, \psi}$.  By Lemma \ref{lem:cd}, it directly holds that $\models_{\mathbf{X}}^{\mathbf{ss}} \phi \Rightarrow \mathtt{C}_{\phi, \psi}$. Let us now turn to $\models_{\mathbf{X}}^{\mathbf{tt}} \mathtt{C}_{\phi, \psi} \Rightarrow \psi$.
    Assume $\not\models_{\mathbf{X}}^{\mathbf{tt}} \mathtt{C}_{\phi, \psi} \Rightarrow \psi$ for the sake of contradiction. It follows that there is $v$ such that $v(\mathtt{C}_{\phi, \psi})\neq 0$ and $v(\psi) = 0$. If $v(C_{\phi, \psi}^v) = \sfrac{1}{2}$, there is $v'$ such that $v(C_{\phi, \psi}^{v'}) = \sfrac{1}{2}$. So, for all conjuncts $\alpha^\sim$ of $C_{\phi, \psi}^{v'}$, $v(\alpha^\sim) \neq 0$. Consider $v^*$ which is exactly like $v$ except that for all the conjuncts $\alpha^\sim$ of $C_{\phi, \psi}^{v'}$, if $v(\alpha^\sim) = \sfrac{1}{2}$, then $v^*(\alpha^\sim) = 1$ and if $\beta \in \mathit{At}(\phi) - \mathit{At}(\psi)$, $v^*(\beta) = v'(\beta)$. Since $\mathit{At}(C_{\phi, \psi}^{v'})\cap (\mathit{At}(\phi) - \mathit{At}(\psi)) = \emptyset$, $v^*$ is well-defined. Now, $v^*(C_{\phi, \psi}^{v'}) = 1$, and given the $\mathbf{SK}$ truth table for conjunction, $v^*(\mathtt{C}_{\phi, \psi}) = 1$. Moreover, $v^*$ is a partial sharpening of $v$ with respect to $\mathit{At}(C_{\phi, \psi}^{v'})$ by construction, and is exactly like $v$ for all $\beta \in \mathit{At}(\psi) - \mathit{At}(C_{\phi, \psi}^{v'})$, so trivially it is also a partial sharpening of $v$ with respect to $\mathit{At}(\psi) - \mathit{At}(C_{\phi, \psi}^{v'})$. Therefore, by Corollary \ref{C2}, $v^*$ is a partial sharpening of $v$ with respect to $\mathit{At}(C_{\phi, \psi}^{v'}) \cup (\mathit{At}(\psi) - \mathit{At}(C_{\phi, \psi}^{v'}))$, and by Corollary \ref{C1} to $\mathit{At}(\psi) \subseteq \mathit{At}(C_{\phi, \psi}^{v'}) \cup (\mathit{At}(\psi) - \mathit{At}(C_{\phi, \psi}^{v'}))$. Given that $v(\psi) = 0$, by Lemma \ref{L1}, it follows that $v^*(\psi) = 0$. Let us now consider the value of $\phi$ under $v^*$. $v^*$ is exactly like $v'$ for all $\alpha \in \mathit{At}(\phi) - \mathit{At}(\psi)$ and for all $\beta \in \mathit{At}(C_{\phi, \psi}^{v'})$ as well, since $C_{\phi, \psi}^{v'}$ is a conjunction of literals and $v'(C_{\phi, \psi}^{v'})=1$. If $\alpha \in (\mathit{At}(\phi) \cap \mathit{At}(\psi)) -  \mathit{At}(C_{\phi, \psi}^{v'})$, then $v'(\alpha) = \sfrac{1}{2}$, so $v^*$ is a partial sharpening of $v'$ with respect to $\mathit{At}(\phi)$. Hence, since $v'(\phi)=1$, $v^*(\phi)=1$ by Lemma \ref{L1}. But $v^*(\psi) = 0$, so $\not\models_{\mathbf{X}}^{\mathbf{st}} \phi \Rightarrow \psi$, which is impossible. If $v(\mathtt{C}_{\phi, \psi}) = 1$, define $v^*$ exactly like $v$, except that if $\beta \in \mathit{At}(\phi) - \mathit{At}(\psi)$, $v^*(\beta) = v'(\beta)$. The remaining of the proof is similar to the previous case.

    The case of $\mathbf{X}=\mathbf{X}_{\wedge}/\mathbf{SK}_{\lor}$ can be treated similarly by taking $\mathtt{E}_{\phi, \psi}$ as an interpolant for the limit case and $\mathtt{D}_{\phi, \psi}$ for the main case. As for the case of $\mathbf{X}=\mathbf{WK}_{\wedge}/\mathbf{WK}_{\lor}$, both $\mathtt{E}_{\phi, \psi}$ and $\mathtt{F}_{\phi, \psi}$ are suitable interpolants for the limit case and $\mathtt{C}_{\phi, \psi}$ and $\mathtt{D}_{\phi, \psi}$ for the main case. To see this, note first that given the $\mathbf{WK}_{\wedge}/\mathbf{WK}_{\lor}$ truth tables, for all $\phi$, if $v(\phi) \neq \sfrac{1}{2}$, every $\alpha \in \mathit{At}(\phi)$ is such that $v(\alpha) \neq \sfrac{1}{2}$. Now, to prove that $\models_{\mathbf{X}}^{\mathbf{ss}} \phi \Rightarrow \mathtt{C}_{\phi, \psi}$, one has to assume that $v(\phi)=1$, which entails that $v(\alpha) \neq \sfrac{1}{2}$ for every $\alpha \in \mathit{At}(\phi)$. This case therefore reduces to the case proved above, for $\textup{
At}(\mathtt{C}_{\phi, \psi}) \subseteq \mathit{At}(\phi)$ and all the schemes discussed here are Boolean normal. Similarly, to prove that $\models_{\mathbf{X}}^{\mathbf{tt}} \mathtt{C}_{\phi, \psi} \Rightarrow \psi$, we assume that $v(\mathtt{C}_{\phi, \psi})\neq 0$ and $v(\psi)=0$, and by Boolean normality, this part of the proof reduces to the previous case.
\end{proof}

Given the definition of the consequence relation of $\mathsf{K}_3$ and $\mathsf{LP}$ as the combination of the Strong Kleene $\mathbf{SK}_{\wedge}/\mathbf{SK}_{\lor}$ and the $\mathbf{ss}$ and $\mathbf{tt}$ standards, respectively, the split interpolation property holds for this pair of logics. A result already proven in \cite{milne2016refinement}. However, this result is now extended
to any pair of logics based on a $\mathbf{ss}$- and $\mathbf{tt}$-consequence relation, respectively, with a scheme incorporating either a $\mathbf{SK}$ conjunction or a $\mathbf{SK}$ disjunction, or both a $\mathbf{WK}$ conjunction and a $\mathbf{WK}$ disjunction. Hence, the property also characterizes, for instance, the pair of logics $\mathsf{K}_3^w$ and $\mathsf{PWK}$, whose consequence relations are obtained by combining the Weak Kleene scheme $\mathbf{WK}{\wedge}/\mathbf{WK}_{\lor}$ with the $\mathbf{ss}$ and $\mathbf{tt}$ standards.

It is now straightforward to see from Fact \ref{fct:subst} that all the aforementioned results still hold when one replaces the $\mathbf{ss}$ or $\mathbf{tt}$ standards by an $\mathbf{st}$ standard. This fact can be formally stated as follows:

\begin{Corollary}\label{cor:posst+}
    If a scheme $\mathbf{X}$ is a scheme of the form $\mathbf{SK}_{\wedge}/\mathbf{Y}_{\lor}$ or $\mathbf{X}_{\wedge}/\mathbf{SK}_{\lor}$ or $\mathbf{WK}_{\wedge}/\mathbf{WK}_{\lor}$ for some $\mathbf{X}_{\wedge}$ and $\mathbf{Y}_{\lor}$, then it satisfies the following split interpolation property: 

\vspace{7pt}
If $\mathit{At}(\phi) \cap \mathit{At}(\psi) \neq \emptyset$ and $\models_{\mathbf{X}}^{\mathbf{st}} \phi \Rightarrow \psi$, then

\begin{itemize}
\setlength\itemsep{1em}
\item there is a formula $\chi$ such that each of its atoms is in $\phi$ and $\psi$, and $\models_{\mathbf{X}}^{\mathbf{ss}} \phi \Rightarrow \chi$ and $\models_{\mathbf{X}}^{\mathbf{st}} \chi \Rightarrow \psi$,
\item there is a formula $\chi$ such that each of its atoms is in $\phi$ and $\psi$, and $\models_{\mathbf{X}}^{\mathbf{st}} \phi \Rightarrow \chi$ and $\models_{\mathbf{X}}^{\mathbf{tt}} \chi \Rightarrow \psi$,
\item there is a formula $\chi$ such that each of its atoms is in $\phi$ and $\psi$, and $\models_{\mathbf{X}}^{\mathbf{st}} \phi \Rightarrow \chi$ and $\models_{\mathbf{X}}^{\mathbf{st}} \chi \Rightarrow \psi$.
\end{itemize}
\end{Corollary}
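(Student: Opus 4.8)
The plan is to derive all three bullets simultaneously from Theorem \ref{thm:psds} together with Fact \ref{fct:subst}, without ever constructing a new interpolant. Suppose $\mathbf{X}$ is one of the listed schemes, $\mathit{At}(\phi) \cap \mathit{At}(\psi) \neq \emptyset$, and $\models_{\mathbf{X}}^{\mathbf{st}} \phi \Rightarrow \psi$. This is precisely the hypothesis of Theorem \ref{thm:psds}, so that theorem hands us a formula $\chi$ with $\mathit{At}(\chi) \subseteq \mathit{At}(\phi) \cap \mathit{At}(\psi)$ such that $\models_{\mathbf{X}}^{\mathbf{ss}} \phi \Rightarrow \chi$ and $\models_{\mathbf{X}}^{\mathbf{tt}} \chi \Rightarrow \psi$. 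I would then argue that this same $\chi$ witnesses each of the three split interpolation properties.

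The key observation is that Fact \ref{fct:subst}, specialized to the diagonal case $\mathbf{X}' = \mathbf{X}$, gives the inclusions $\models_{\mathbf{X}}^{\mathbf{ss}} \subseteq \models_{\mathbf{X}}^{\mathbf{st}}$ and $\models_{\mathbf{X}}^{\mathbf{tt}} \subseteq \models_{\mathbf{X}}^{\mathbf{st}}$. Hence from $\models_{\mathbf{X}}^{\mathbf{ss}} \phi \Rightarrow \chi$ we also obtain $\models_{\mathbf{X}}^{\mathbf{st}} \phi \Rightarrow \chi$, and from $\models_{\mathbf{X}}^{\mathbf{tt}} \chi \Rightarrow \psi$ we also obtain $\models_{\mathbf{X}}^{\mathbf{st}} \chi \Rightarrow \psi$. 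Pairing $\models_{\mathbf{X}}^{\mathbf{ss}} \phi \Rightarrow \chi$ with $\models_{\mathbf{X}}^{\mathbf{st}} \chi \Rightarrow \psi$ yields the first bullet; pairing $\models_{\mathbf{X}}^{\mathbf{st}} \phi \Rightarrow \chi$ with $\models_{\mathbf{X}}^{\mathbf{tt}} \chi \Rightarrow \psi$ yields the second; and pairing the two $\mathbf{st}$-weakenings yields the third. Since $\chi$ is unchanged throughout, the atom-sharing requirement $\mathit{At}(\chi) \subseteq \mathit{At}(\phi) \cap \mathit{At}(\psi)$ need not be re-checked in any of the three cases.

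I expect there to be essentially no obstacle: the corollary is a pure weakening of the conclusion of Theorem \ref{thm:psds} along the inclusions of Fact \ref{fct:subst}. The only points worth stating explicitly are (i) that Fact \ref{fct:subst} is phrased for arbitrary Boolean normal monotonic schemes $\mathbf{X}$ and $\mathbf{X}'$, so in particular it covers the instance $\mathbf{X} = \mathbf{X}'$ used here, and (ii) that, by Theorem \ref{thm:dare}, the hypothesis $\models_{\mathbf{X}}^{\mathbf{st}} \phi \Rightarrow \psi$ is just classical validity, so no conversion between the two-valued and three-valued settings is required.
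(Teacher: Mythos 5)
Your proof is correct and is exactly the paper's argument: the paper likewise derives all three bullets by taking the single interpolant $\chi$ supplied by Theorem \ref{thm:psds} and weakening $\models_{\mathbf{X}}^{\mathbf{ss}}$ and $\models_{\mathbf{X}}^{\mathbf{tt}}$ to $\models_{\mathbf{X}}^{\mathbf{st}}$ via Fact \ref{fct:subst}. You simply spell out the details that the paper leaves implicit in its one-line proof.
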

\begin{proof}
    This follows directly from Fact \ref{fct:subst} and Theorem \ref{thm:psds}.
\end{proof}

It is worth noting that, given the equivalence of classical logic with any logic featuring a $\mathbf{st}$-consequence relation based on a Boolean normal monotonic scheme, as stated in Theorem \ref{thm:dare}, the final instance of the split interpolation theorem is simply an alternative formulation of Craig's deductive interpolation theorem, already stated in the previous section. The first two formulations are more refined than the third, as they entail it. However, the formulation of the split interpolation property of Theorem \ref{thm:psds} is ultimately the most refined, as it entails all. We will revisit this in Section \ref{sec:dis}.

\subsection{Negative results}\label{ssct:nrds}
We now turn to the negative results. The subsequent lemma will help us to prove the main result of the subsection. Namely that the property fails for the pair of logics with an $\mathbf{ss}$ and $\mathbf{tt}$ consequence relations based on a scheme in
\begin{align*}
\mathcal{S} = \ &\lbrace{\mathbf{LMK}_{\wedge}/\mathbf{LMK}_{\lor}},\ {\mathbf{LMK}_{\wedge}/\mathbf{RMK}_{\lor}},\ {\mathbf{LMK}_{\wedge}/\mathbf{WK}_{\lor}},\ {\mathbf{RMK}_{\wedge}/\mathbf{LMK}_{\lor}},\\ 
&{\mathbf{RMK}_{\wedge}/\mathbf{RMK}_{\lor}},\ {\mathbf{RMK}_{\wedge}/\mathbf{WK}_{\lor}},\ {\mathbf{WK}_{\wedge}/\mathbf{LMK}_{\lor}},\ {\mathbf{WK}_{\wedge}/\mathbf{RMK}_{\lor}}\rbrace.
\end{align*}

\begin{Lemma}\label{lem:ner}
    Let $v$ and $v'$ be two valuations based on the same scheme in $\mathcal{S}$. Let $v(p)=\sfrac{1}{2}$, and $v'(q)=\sfrac{1}{2}$. For all $\phi \in \mathcal{L}$, if $\phi$ is composed only of the atoms $p$ and $q$, then $v(\phi) = \sfrac{1}{2}$ or $v'(\phi) = \sfrac{1}{2}$.
\end{Lemma}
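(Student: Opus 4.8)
The plan is to argue by structural induction on $\phi$, exploiting a single feature common to all eight schemes in $\mathcal{S}$: in each of their binary connectives the value $\sfrac{1}{2}$ is infectious on one fixed argument place.

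First I would isolate this feature as an auxiliary observation: for every scheme $\mathbf{X}\in\mathcal{S}$ and every binary connective $\star\in\{\wedge,\lor\}$ there is an argument place $i_\star\in\{1,2\}$ such that the $\star$-operation returns $\sfrac{1}{2}$ as soon as its $i_\star$-th input is $\sfrac{1}{2}$, whatever the other input is. The witnessing place is read off the scheme: $i_\wedge=1$ when $\mathbf{X}$ uses $\mathbf{LMK}_\wedge$, and $i_\wedge=2$ when it uses $\mathbf{RMK}_\wedge$ or $\mathbf{WK}_\wedge$; symmetrically $i_\lor=1$ for $\mathbf{LMK}_\lor$ and $i_\lor=2$ for $\mathbf{RMK}_\lor$ or $\mathbf{WK}_\lor$. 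Verifying this is a finite check of the unsettled entries in Figures~\ref{fig:tvwedge} and~\ref{fig:tvlor} against the fixed entries of Figure~\ref{fig:trt}. What makes the check go through is precisely that no scheme in $\mathcal{S}$ contains $\mathbf{SK}_\wedge$ or $\mathbf{SK}_\lor$: for the Strong Kleene connectives neither place is infectious, since $\sfrac{1}{2}\wedge 0 = 0$ and $\sfrac{1}{2}\lor 1 = 1$.

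With the observation in place I would run the induction. In the base case $\phi$ is atomic, so $\phi=p$ — whence $v(\phi)=v(p)=\sfrac{1}{2}$ — or $\phi=q$ — whence $v'(\phi)=v'(q)=\sfrac{1}{2}$. For $\phi=\neg\psi$, the induction hypothesis gives $v(\psi)=\sfrac{1}{2}$ or $v'(\psi)=\sfrac{1}{2}$, and since $f^{\neg}(\sfrac{1}{2})=\sfrac{1}{2}$ in every Boolean normal monotonic scheme the same disjunction transfers to $\neg\psi$. For $\phi=\psi_1\star\psi_2$ with $\star\in\{\wedge,\lor\}$, set $i=i_\star$; since $\mathit{At}(\psi_i)\subseteq\mathit{At}(\phi)\subseteq\{p,q\}$, the induction hypothesis applies to $\psi_i$ and yields $v(\psi_i)=\sfrac{1}{2}$ or $v'(\psi_i)=\sfrac{1}{2}$. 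In the first case $v(\phi)=\sfrac{1}{2}$ by the observation, and in the second case $v'(\phi)=\sfrac{1}{2}$; either way the claim holds for $\phi$.

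I do not anticipate a genuine obstacle: the induction only ever appeals to the hypothesis on the subformula lying on the infectious side of each connective, so there is no need to reason about $p$ and $q$ jointly. The one point demanding attention is the bookkeeping for the asymmetric schemes, where the infectious places of $\wedge$ and $\lor$ may differ; confirming that a single fixed place per connective always does the job is the heart of the argument, and it reduces to the finite table verification described above.
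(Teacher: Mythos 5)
Your proof is correct and follows essentially the same route as the paper's: a structural induction on $\phi$ exploiting that every non-Strong-Kleene choice of $\wedge$ and $\lor$ has an argument place on which $\sfrac{1}{2}$ is infectious. Your version is in fact slightly tidier, since isolating the fixed infectious place $i_\star$ lets you apply the induction hypothesis to a single subformula, whereas the paper applies it to both subformulas and then case-splits over the schemes in $\mathcal{S}$.
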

\begin{proof}
The proof is by induction on the length of $\phi$.

    \medskip

    \underline{Base case:} Straightforward.

    \medskip

    \underline{Inductive step:} 
    \begin{itemize}[align=left]
    \setlength\itemsep{1em}
        \item[($\phi = \neg \psi$).] If $\neg \psi$ is composed only of $p$ and $q$, then $\psi$ as well. So $v(\psi) = \sfrac{1}{2}$ or $v'(\psi) = \sfrac{1}{2}$ by IH. In both cases, $v(\neg \psi) = \sfrac{1}{2}$ or $v'(\neg \psi) = \sfrac{1}{2}$.

        \item[($\phi = \psi \wedge \chi$).] If $\psi \wedge \chi$ is composed only of $p$ and $q$, then $\psi$ and $\chi$ are each at least composed of $p$ or $q$. Hence, $v(\psi) = \sfrac{1}{2}$ or $v'(\psi) = \sfrac{1}{2}$ on the one hand, and $v(\chi) = \sfrac{1}{2}$ or $v'(\chi) = \sfrac{1}{2}$ on the other. We start by assuming that $v(\psi) = \sfrac{1}{2}$, the case for $v'(\psi) = \sfrac{1}{2}$ being dual. If $v(\psi) = \sfrac{1}{2}$, $v(\psi \wedge \chi) = \sfrac{1}{2}$ in any scheme of the form $\mathbf{WK}_{\wedge}/\mathbf{Y}_{\lor}$ or $\mathbf{LMK}_{\wedge}/\mathbf{Y}_{\lor}$. If in addition $v(\chi) = \sfrac{1}{2}$, $v(\psi \wedge \chi) = \sfrac{1}{2}$ in all schemes. If on the other hand $v'(\chi)=\sfrac{1}{2}$, then $v'(\psi \wedge \chi) = \sfrac{1}{2}$ in $\mathbf{RMK}_{\wedge}/\mathbf{Y}_{\lor}$ for any scheme $\mathbf{Y}$. 

        \item[($\phi = \psi \lor \chi$).] If $\psi \lor \chi$ is composed only of $p$ and $q$, then $\psi$ and $\chi$ are each at least composed of $p$ or $q$. Hence, $v(\psi) = \sfrac{1}{2}$ or $v'(\psi) = \sfrac{1}{2}$ on the one hand, and $v(\chi) = \sfrac{1}{2}$ or $v'(\chi) = \sfrac{1}{2}$ on the other. We start by assuming that $v(\psi) = \sfrac{1}{2}$, the case for $v'(\psi) = \sfrac{1}{2}$ being dual. If $v(\psi) = \sfrac{1}{2}$, $v(\psi \lor \chi) = \sfrac{1}{2}$ in any scheme of the form $\mathbf{X}_{\wedge}/\mathbf{WK}_{\lor}$ or $\mathbf{X}_{\wedge}/\mathbf{LMK}_{\lor}$. If in addition $v(\chi) = \sfrac{1}{2}$, $v(\psi \lor \chi) = \sfrac{1}{2}$ in all schemes. If on the other hand $v'(\chi)=\sfrac{1}{2}$, then $v'(\psi \lor \chi) = \sfrac{1}{2}$ in $\mathbf{X}_{\wedge}/\mathbf{RMK}_{\lor}$ for any scheme $\mathbf{X}$.
    \end{itemize}
\end{proof}

For each scheme in $\mathcal{S}$, we begin by showing that, for some classically valid inference, no interpolant exists for the pair of logics with $\mathbf{st}$ and $\mathbf{tt}$ consequence relations based on this scheme. The inference serving as a counterexample involves two atoms. By applying Lemma \ref{lem:ner}, we show that constructing an interpolant for this pair of logics is impossible.

\begin{Theorem}\label{thm:nett}
If a scheme $\mathbf{X}$ is in $\mathcal{S}$, then it does not satisfy the following split interpolation property: 

\vspace{7pt}
If $\mathit{At}(\phi) \cap \mathit{At}(\psi) \neq \emptyset$ and $\models_{\mathbf{X}}^{\mathbf{st}} \phi \Rightarrow \psi$, then there is a formula $\chi$ such that each of its atoms is in $\phi$ and $\psi$, and $\models_{\mathbf{X}}^{\mathbf{st}} \phi \Rightarrow \chi$ and $\models_{\mathbf{X}}^{\mathbf{tt}} \chi \Rightarrow \psi$.
\end{Theorem}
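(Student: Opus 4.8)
The plan is to reduce the two consequence relations involved to simple conditions and then, for every scheme in $\mathcal{S}$, to knock out all candidate interpolants simultaneously using Lemma \ref{lem:ner}. First I would apply Theorem \ref{thm:dare} to rewrite $\models_{\mathbf{X}}^{\mathbf{st}}$ as $\models_{\mathsf{CL}}$, so that the hypothesis of the property becomes $\phi\models_{\mathsf{CL}}\psi$ and any candidate interpolant $\chi$ must satisfy $\phi\models_{\mathsf{CL}}\chi$; and I would observe that $\models_{\mathbf{X}}^{\mathbf{tt}}\chi\Rightarrow\psi$ holds if and only if $v(\psi)=0$ implies $v(\chi)=0$ for every $\mathbf{X}$-valuation $v$. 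It then suffices to exhibit formulas $\phi$ and $\psi$ with $\phi\models_{\mathsf{CL}}\psi$ and $\emptyset\neq\mathit{At}(\phi)\cap\mathit{At}(\psi)\subseteq\{p,q\}$, together with two $\mathbf{X}$-valuations $v$ and $v'$ such that $v(p)=\sfrac{1}{2}$, $v'(q)=\sfrac{1}{2}$ and $v(\psi)=v'(\psi)=0$. Indeed, any interpolant $\chi$ would then be built only from atoms in $\{p,q\}$ (its atoms being in both $\phi$ and $\psi$), so $\models_{\mathbf{X}}^{\mathbf{tt}}\chi\Rightarrow\psi$ would force $v(\chi)=0$ and $v'(\chi)=0$, contradicting Lemma \ref{lem:ner}, which says that $v(\chi)=\sfrac{1}{2}$ or $v'(\chi)=\sfrac{1}{2}$.

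The remaining task is to provide, for each of the eight schemes in $\mathcal{S}$, such an inference together with such a pair of valuations. I would take $\phi$ to be a contradiction, so that $\phi\models_{\mathsf{CL}}\psi$ is automatic and all the work goes into $\psi$, which must be drivable to $0$ once while $p$ takes the value $\sfrac{1}{2}$ and once while $q$ does; the scheme's direction of infectiousness then dictates where $p$, $q$ and any auxiliary atom should appear. When the conjunction is $\mathbf{LMK}_{\wedge}$, an occurrence of $0$ on the left of a conjunction is absorbing, so a $\psi$ whose leftmost conjunct can be falsified to $0$ does the job regardless of the values of $p$ and $q$; dually for $\mathbf{RMK}_{\wedge}$, with the falsifiable conjunct on the right. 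When the conjunction is $\mathbf{WK}_{\wedge}$, no conjunction can ever turn a $\sfrac{1}{2}$ into a $0$, so I would instead negate a disjunction: since every scheme in $\mathcal{S}$ has $\mathbf{LMK}_{\lor}$ or $\mathbf{RMK}_{\lor}$ for disjunction, which is non-infectious on one side, one of $q\lor p$, $p\lor q$ evaluates to $1$ when its $\sfrac{1}{2}$-valued argument is adjacent to a classical $1$, and its negation is then $0$. In each case one checks that $\mathit{At}(\psi)\cap\mathit{At}(\phi)$ is a nonempty subset of $\{p,q\}$ and that $v$ and $v'$ really send $\psi$ to $0$, and then invokes the reduction of the first paragraph; should matching both valuations in the $\mathbf{WK}_{\wedge}$ cases prove delicate, one can instead arrange $\mathit{At}(\phi)\cap\mathit{At}(\psi)=\{p\}$ and appeal only to the degenerate instance of Lemma \ref{lem:ner}, namely that a formula built from $p$ alone takes the value $\sfrac{1}{2}$ whenever $p$ does.

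The main obstacle I anticipate is precisely this scheme-by-scheme construction in the $\mathbf{WK}_{\wedge}$ cases: since $\mathbf{WK}_{\wedge}$ is fully infectious while the companion disjunction de-infects on only one side, no single $\psi$ will behave symmetrically in $p$ and $q$, so $\psi$ and the two valuations must be tailored individually, with care that the connective combining the part of $\psi$ responsible for the $p$-valuation and the part responsible for the $q$-valuation does not reintroduce a $\sfrac{1}{2}$. The remaining ingredients — the reduction via Theorem \ref{thm:dare}, the reading of $\models_{\mathbf{X}}^{\mathbf{tt}}$, and the final clash with Lemma \ref{lem:ner} — are routine.
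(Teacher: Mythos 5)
Your opening reduction is exactly the paper's: rewrite $\models_{\mathbf{X}}^{\mathbf{st}}$ as $\models_{\mathsf{CL}}$ via Theorem~\ref{thm:dare}, read $\models_{\mathbf{X}}^{\mathbf{tt}}\chi\Rightarrow\psi$ contrapositively, and clash two valuations $v,v'$ with $v(p)=v'(q)=\sfrac{1}{2}$ and $v(\psi)=v'(\psi)=0$ against Lemma~\ref{lem:ner}. The gap is in how you instantiate it. By taking $\phi$ to be a classical contradiction, your construction no longer uses anything specific to $\mathcal{S}$: in your single-shared-atom fallback, the only fact invoked is that a formula built from $p$ alone takes the value $\sfrac{1}{2}$ whenever $p$ does, and this holds in \emph{every} Boolean normal monotonic scheme. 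Applied verbatim to $\mathbf{SK}_{\wedge}/\mathbf{SK}_{\lor}$ (take $\phi=p\wedge\neg p$, $\psi=p\wedge r$, $v(p)=\sfrac{1}{2}$, $v(r)=0$), the same reasoning would ``refute'' the split interpolation property for the pair $(\mathsf{K}_3,\mathsf{LP})$, in direct conflict with Theorem~\ref{thm:psds} and with the paper's explicit claim that contradictory premises are covered there by the interpolants $\mathtt{E}_{\phi,\psi}$ and $\mathtt{F}_{\phi,\psi}$. So your counterexamples trade on the degenerate limit case of the formulation (a classically unsatisfiable antecedent), not on the feature that separates $\mathcal{S}$ from the schemes for which the property holds; at best this probes a fragility in how the limit case is handled, and it evaporates under the natural reading on which the theorem is supposed to be informative (e.g.\ Milne's proviso that $\phi$ be satisfiable and $\psi$ falsifiable). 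A proof of Theorem~\ref{thm:nett} should not rest on a counterexample that, if accepted, would equally overturn the paper's positive results.

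What the theorem actually demands---and what your proposal leaves undone---is, for each scheme in $\mathcal{S}$, a classically \emph{satisfiable} $\phi$ together with a $\psi$ classically entailed by it that is nevertheless driven to $0$ at one valuation gapping $p$ and at another gapping $q$; only then does the failure genuinely trace to the asymmetric middle-Kleene connectives via Lemma~\ref{lem:ner}. This scheme-by-scheme tailoring is precisely what you flag as the anticipated obstacle in the $\mathbf{WK}_{\wedge}$ cases and never resolve: as you yourself observe, the parts $\neg(p\lor q)$ and $\neg(q\lor p)$ cannot be recombined by $\mathbf{WK}_{\wedge}$ or by a one-sided middle disjunction without reintroducing $\sfrac{1}{2}$, and your escape route is the problematic contradiction-based fallback. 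The paper does the required work by fixing $\phi=p\wedge q$ and choosing $\psi$ according to the asymmetric connective---for instance $(q\lor r)\wedge(p\lor\neg r)$ for $\mathbf{X}_{\wedge}/\mathbf{LMK}_{\lor}$ and $(r\wedge q)\lor(\neg r\wedge p)$ for $\mathbf{LMK}_{\wedge}/\mathbf{Y}_{\lor}$---and then runs the same two-valuation clash; counterexamples of that kind, with a satisfiable $\phi$, are what your argument is missing.
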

\begin{proof}
We show that for every scheme $\mathbf{X}$ in $\mathcal{S}$, there are $\phi, \psi$ such that $\models_{\mathbf{X}}^{\mathbf{st}} \phi \Rightarrow \psi$, but there is no $\chi$ such that $\mathit{At}(\chi) \subseteq \mathit{At}(\phi) \cap \mathit{At}(\psi)$, $\models_{\mathbf{X}}^{\mathbf{st}} \phi \Rightarrow \chi$ and $\models_{\mathbf{X}}^{\mathbf{tt}} \chi \Rightarrow \psi$. For every scheme $\mathbf{X} \in \mathcal{S}$, we define $\phi = p \wedge q$ and

\begin{itemize}
\setlength\itemsep{1em}
    \item $\psi = (q \lor r) \wedge (p \lor \neg r)$, if $\mathbf{X} = \mathbf{X}_{\wedge}/\mathbf{LMK}_{\lor}$ for some $\mathbf{X}_{\wedge}$,
    \item $\psi = (r \lor q) \wedge (\neg r \lor p)$, if $\mathbf{X} = \mathbf{X}_{\wedge}/\mathbf{RMK}_{\lor}$ for some $\mathbf{X}_{\wedge}$,
    \item $\psi = (r \wedge q) \lor (\neg r \wedge p)$, if $\mathbf{X} = \mathbf{LMK}_{\wedge}/\mathbf{Y}_{\lor}$ for some $\mathbf{Y}_{\lor}$,
    \item $\psi = (q \wedge r) \lor (p \wedge \neg r)$, if $\mathbf{X} = \mathbf{RMK}_{\wedge}/\mathbf{Y}_{\lor}$ for some $\mathbf{Y}_{\lor}$.
\end{itemize}

It can easily be checked that, for each of these cases, $\models_{\mathbf{X}}^{\mathbf{st}} \phi \Rightarrow \psi$. We now show that there is no $\chi$ such that $\mathit{At}(\chi) \subseteq \mathit{At}(\phi) \cap \mathit{At}(\psi)$ and $\models_{\mathbf{X}}^{\mathbf{tt}} \chi \Rightarrow \psi$.

We only prove the first case, the others being similar. Let $v$ be a valuation such that $v(p)=0$, $v(q)= \sfrac{1}{2}$, $v(r)=0$ and let $v'$ be a valuation such that $v'(p)=\sfrac{1}{2}$, $v'(q)= 0$, $v'(r)=1$. Then $v((q \lor r) \wedge (p \lor \neg r))= 0$ and by Lemma \ref{lem:ner}, for all $\chi$ such that $\mathit{At}(\chi) \subseteq \lbrace p, q \rbrace = \mathit{At}(\phi) \cap \mathit{At}(\psi)$, $v(\chi) = \sfrac{1}{2}$ or $v'(\chi) = \sfrac{1}{2}$. Hence, for all $\chi$, $v \not\models_{\mathbf{X}}^{\mathbf{tt}} \chi \Rightarrow \psi$ or $v' \not\models_{\mathbf{X}}^{\mathbf{tt}} \chi \Rightarrow \psi$, and therefore there is no $\chi$ such that $\mathit{At}(\chi) \subseteq \mathit{At}(\phi) \cap \mathit{At}(\psi)$ and $\models_{\mathbf{X}}^{\mathbf{tt}} \chi \Rightarrow \psi$, which, in turn, entails that there is no $\chi$ such that $\mathit{At}(\chi) \subseteq \mathit{At}(\phi) \cap \mathit{At}(\psi)$, $\models_{\mathbf{X}}^{\mathbf{st}} \phi \Rightarrow \chi$ and $\models_{\mathbf{X}}^{\mathbf{tt}} \chi \Rightarrow \psi$, as intended.
\end{proof}

\begin{Corollary}
    If a scheme $\mathbf{X}$ is in $\mathcal{S}$, then it does not satisfy the following split interpolation property: 

\vspace{7pt}
If $\mathit{At}(\phi) \cap \mathit{At}(\psi) \neq \emptyset$ and $\models_{\mathbf{X}}^{\mathbf{st}} \phi \Rightarrow \psi$, then there is a formula $\chi$ such that each of its atoms is in $\phi$ and $\psi$, and $\models_{\mathbf{X}}^{\mathbf{st}} \phi \Rightarrow \chi$ and $\models_{\mathbf{X}}^{\mathbf{tt}} \chi \Rightarrow \psi$.
\end{Corollary}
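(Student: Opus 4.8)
The plan is to observe that the assertion of this corollary is, word for word, the assertion of Theorem \ref{thm:nett}: for every scheme $\mathbf{X} \in \mathcal{S}$ both statements deny exactly the split interpolation property whose interpolant is required to satisfy $\models_{\mathbf{X}}^{\mathbf{st}} \phi \Rightarrow \chi$ together with $\models_{\mathbf{X}}^{\mathbf{tt}} \chi \Rightarrow \psi$, under the same hypotheses $\mathit{At}(\phi) \cap \mathit{At}(\psi) \neq \emptyset$ and $\models_{\mathbf{X}}^{\mathbf{st}} \phi \Rightarrow \psi$. The corollary therefore follows instantly from Theorem \ref{thm:nett}, the proof consisting of the single step of invoking that theorem; there is nothing further to establish.

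Should a self-contained derivation be preferred over a bare citation, I would replay the counterexample built in the proof of Theorem \ref{thm:nett}. For each $\mathbf{X} \in \mathcal{S}$ one sets $\phi = p \wedge q$ and chooses the matching $\psi$ from the four displayed cases, for example $\psi = (q \lor r) \wedge (p \lor \neg r)$ when $\mathbf{X} = \mathbf{X}_\wedge/\mathbf{LMK}_\lor$, so that $\models_{\mathbf{X}}^{\mathbf{st}} \phi \Rightarrow \psi$ holds while $\mathit{At}(\phi) \cap \mathit{At}(\psi) = \lbrace p, q \rbrace$. The crux is that, by Lemma \ref{lem:ner}, every $\chi$ with $\mathit{At}(\chi) \subseteq \lbrace p, q \rbrace$ is sent to $\sfrac{1}{2}$ by at least one of the two witness valuations $v, v'$ exhibited there; since each of these valuations falsifies $\psi$, no such $\chi$ can satisfy $\models_{\mathbf{X}}^{\mathbf{tt}} \chi \Rightarrow \psi$. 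Hence the right-hand conjunct already fails for every admissible interpolant, and the full conjunction $\models_{\mathbf{X}}^{\mathbf{st}} \phi \Rightarrow \chi$ and $\models_{\mathbf{X}}^{\mathbf{tt}} \chi \Rightarrow \psi$ fails a fortiori, which is precisely the stated failure of split interpolation.

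The main obstacle---establishing via Lemma \ref{lem:ner} that no formula on the two shared atoms $p, q$ can avoid the value $\sfrac{1}{2}$ under both witness valuations simultaneously---has already been discharged within Theorem \ref{thm:nett}. At the level of the corollary no residual difficulty remains, since its claim is identical to that of the theorem.
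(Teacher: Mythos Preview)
Your observation is correct: the corollary, as stated, is word for word identical to Theorem~\ref{thm:nett}, so citing that theorem suffices. Your proposal is therefore a valid proof of the displayed statement.

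It is worth noting, however, that the paper's own proof does \emph{not} proceed this way. The paper argues via Fact~\ref{fct:subst} that $\models_{\mathbf{X}}^{\mathbf{ss}} \subseteq \models_{\mathbf{X}}^{\mathbf{st}}$, so that any counterexample to $\mathbf{st}$-validity is also one to $\mathbf{ss}$-validity. That argument only makes sense if the corollary is meant to concern the pair $\mathbf{ss}/\mathbf{tt}$ rather than $\mathbf{st}/\mathbf{tt}$; the occurrence of $\models_{\mathbf{X}}^{\mathbf{st}} \phi \Rightarrow \chi$ in the displayed statement is almost certainly a typographical slip for $\models_{\mathbf{X}}^{\mathbf{ss}} \phi \Rightarrow \chi$. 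Under that intended reading, the paper's route is: Theorem~\ref{thm:nett} exhibits $\phi,\psi$ with no $\chi$ satisfying $\models_{\mathbf{X}}^{\mathbf{tt}}\chi\Rightarrow\psi$ at all, so a fortiori none satisfies the conjunction with $\models_{\mathbf{X}}^{\mathbf{ss}}\phi\Rightarrow\chi$ (or, alternatively, any $\chi$ working for $\mathbf{ss}/\mathbf{tt}$ would work for $\mathbf{st}/\mathbf{tt}$ by the inclusion). Your replay of the Theorem~\ref{thm:nett} counterexample already establishes this stronger, intended version as well, since you correctly isolate the failure on the $\mathbf{tt}$ side. So your argument covers both the literal and the intended statement; the paper's proof addresses only the intended one.
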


\begin{proof}
    By Fact \ref{fct:subst}, for any $\mathbf{X}$, $\models_{\mathbf{X}}^{\mathbf{ss}} \ \subseteq \ \models_{\mathbf{X}}^{\mathbf{st}}$, so any counterexample to the $\mathbf{st}$-validity of an inference is also a counterexample to the $\mathbf{ss}$-validity of an inference.
\end{proof}

\begin{figure}[b]
\begin{center}
 \begin{displaymath}
\begin{tabular}{|c| c | c | c|c|}
\hline
\diagbox{$\mathbf{X}_\wedge$}{$\mathbf{Y}_\lor$}& $\mathbf{SK}_\lor$ & $\mathbf{WK}_\lor$ & $\mathbf{LMK}_\lor$ & $\mathbf{RMK}_\lor$ \\
\hline
$\mathbf{SK}_\wedge$ & \text{\cmark} & \text{\cmark} & \text{\cmark} & \text{\cmark}\\
\hline
$\mathbf{WK}_\wedge$ & \text{\cmark} & \text{\cmark} & \text{\xmark} & \text{\xmark}\\
\hline
$\mathbf{LMK}_\wedge$ & \text{\cmark} & \text{\xmark} & \text{\xmark} & \text{\xmark}\\
\hline
$\mathbf{RMK}_\wedge$ & \text{\cmark} & \text{\xmark} & \text{\xmark} & \text{\xmark}\\
\hline
\end{tabular}
\end{displaymath}
\caption{Schemes $\mathbf{X}_{\wedge}/\mathbf{Y}_{\lor}$ for which the refined property fails or holds}\label{tbl:2}
\end{center}
\end{figure}

\begin{Theorem}\label{thm:negssst}
If a scheme $\mathbf{X}$ is in $\mathcal{S}$, then it does not satisfy the following split interpolation property: 

\vspace{7pt}
If $\mathit{At}(\phi) \cap \mathit{At}(\psi) \neq \emptyset$ and $\models_{\mathbf{X}}^{\mathbf{st}} \phi \Rightarrow \psi$, then there is a formula $\chi$ such that each of its atoms is in $\phi$ and $\psi$, and $\models_{\mathbf{X}}^{\mathbf{ss}} \phi \Rightarrow \chi$ and $\models_{\mathbf{X}}^{\mathbf{st}} \chi \Rightarrow \psi$.
\end{Theorem}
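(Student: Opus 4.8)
The plan is to deduce this from Theorem~\ref{thm:nett} by a De Morgan duality argument: I would show that $\mathcal{S}$ is closed under the appropriate dualization and that, for a Boolean normal monotonic scheme, the $\mathbf{ss}/\mathbf{st}$ split interpolation property is equivalent to the $\mathbf{st}/\mathbf{tt}$ property for the dual scheme. Since every scheme considered has $f^{\neg}(x)=1-x$ and $x\mapsto 1-x$ is an order-automorphism of $(\mathcal{V},\le_I)$ (it fixes $\sfrac{1}{2}$ and swaps $0$ and $1$), the De Morgan image $g(a,b):=1-f(1-a,1-b)$ of a Boolean normal monotonic binary operation $f$ is again Boolean normal and monotonic. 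Given $\mathbf{X}$ with conjunction table $\mathbf{A}$ and disjunction table $\mathbf{B}$, let $\mathbf{X}^{\partial}$ be the scheme whose conjunction table is the De Morgan image of $\mathbf{B}$ and whose disjunction table is the De Morgan image of $\mathbf{A}$. A direct check on Figures~\ref{fig:tvwedge}--\ref{fig:tvlor} shows the De Morgan image sends the disjunction table of each of $\mathbf{SK},\mathbf{WK},\mathbf{LMK},\mathbf{RMK}$ to the conjunction table of the same one, so that $(\mathbf{A}_{\wedge}/\mathbf{B}_{\lor})^{\partial}=\mathbf{B}_{\wedge}/\mathbf{A}_{\lor}$ --- dualization simply interchanges the roles of conjunction and disjunction. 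Hence $\partial$ is an involution of the sixteen schemes, it fixes setwise the ``positive'' family $\{\mathbf{SK}_{\wedge}/\mathbf{Y}_{\lor}\}\cup\{\mathbf{X}_{\wedge}/\mathbf{SK}_{\lor}\}\cup\{\mathbf{WK}_{\wedge}/\mathbf{WK}_{\lor}\}$, and therefore maps $\mathcal{S}$ into $\mathcal{S}$.

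Next I would prove the translation. Writing $\phi^{\partial}$ for the De Morgan dual of $\phi$ (each atom $p$ replaced by $\neg p$, $\wedge$ and $\lor$ interchanged, $\neg$ left in place), an induction on $\phi$ --- base case immediate, negation step by involutivity, binary steps by the defining relation between the tables of $\mathbf{X}^{\partial}$ and of $\mathbf{X}$ --- gives $v_{\mathbf{X}^{\partial}}(\phi^{\partial})=1-v_{\mathbf{X}}(\phi)$ for the valuations extending a common atom assignment. Since $\mathit{At}(\phi^{\partial})=\mathit{At}(\phi)$ and $x\mapsto 1-x$ sends $\{1\}$ to $\{0\}$ and the non-false set $\{1,\sfrac{1}{2}\}$ to the non-true set $\{0,\sfrac{1}{2}\}$, this yields, for all $\alpha,\beta$, that $\models_{\mathbf{X}}^{\mathbf{st}}\alpha\Rightarrow\beta$ iff $\models_{\mathbf{X}^{\partial}}^{\mathbf{st}}\beta^{\partial}\Rightarrow\alpha^{\partial}$, and $\models_{\mathbf{X}}^{\mathbf{ss}}\alpha\Rightarrow\beta$ iff $\models_{\mathbf{X}^{\partial}}^{\mathbf{tt}}\beta^{\partial}\Rightarrow\alpha^{\partial}$ (the latter by contraposing ``$v(\alpha)=1\Rightarrow v(\beta)=1$'' after the swap, which reverses the inference). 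Consequently $\mathbf{X}$ satisfies the $\mathbf{ss}/\mathbf{st}$ split interpolation property of the statement if and only if $\mathbf{X}^{\partial}$ satisfies the $\mathbf{st}/\mathbf{tt}$ property of Theorem~\ref{thm:nett}: given $\phi,\psi$ with a shared atom and $\models_{\mathbf{X}}^{\mathbf{st}}\phi\Rightarrow\psi$, a formula $\chi$ over $\mathit{At}(\phi)\cap\mathit{At}(\psi)$ with $\models_{\mathbf{X}}^{\mathbf{ss}}\phi\Rightarrow\chi$ and $\models_{\mathbf{X}}^{\mathbf{st}}\chi\Rightarrow\psi$ is exactly a $\chi$ whose dual $\chi^{\partial}$ interpolates the $\mathbf{st}/\mathbf{tt}$ instance $\psi^{\partial}\Rightarrow\phi^{\partial}$ (interchanging the two sides is harmless, since the shared-atom hypothesis is symmetric). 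Since $\mathbf{X}\in\mathcal{S}$ implies $\mathbf{X}^{\partial}\in\mathcal{S}$, and Theorem~\ref{thm:nett} says the $\mathbf{st}/\mathbf{tt}$ property fails throughout $\mathcal{S}$, the $\mathbf{ss}/\mathbf{st}$ property fails throughout $\mathcal{S}$.

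The delicate points should only be bookkeeping: the finite-table verification that $\partial$ behaves as claimed (so that $\mathcal{S}$ is $\partial$-closed), and keeping the directions straight in the two consequence-relation translations --- in particular the $\mathbf{ss}\leftrightarrow\mathbf{tt}$ one reverses the inference and relies on $\{1,\sfrac{1}{2}\}$ being the non-false set. A self-contained alternative, avoiding the duality apparatus, is to transcribe the proof of Theorem~\ref{thm:nett}: by Theorem~\ref{thm:dare} replace $\models_{\mathbf{X}}^{\mathbf{st}}\chi\Rightarrow\psi$ by $\models_{\mathsf{CL}}\chi\Rightarrow\psi$ (so the binding requirement becomes $\models_{\mathbf{X}}^{\mathbf{ss}}\phi\Rightarrow\chi$), take $\psi:=\neg p\lor\neg q$ and $\phi$ the De Morgan dual of the formula used for $\mathbf{X}^{\partial}$ in Theorem~\ref{thm:nett}, and rule out every $\chi$ over $\{p,q\}$ using Lemma~\ref{lem:ner}, which is invariant under $x\mapsto 1-x$ and hence available for all schemes in $\mathcal{S}$.
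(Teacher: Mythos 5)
Your proposal is correct, and it reaches the result by a genuinely different route from the paper. The paper's own proof is a direct construction: it exhibits, for each scheme in $\mathcal{S}$, an explicit classically valid inference with $\psi = p \lor q$ and a scheme-dependent $\phi$ (precisely the De Morgan mirror images, with premise and conclusion swapped, of the formulas used in Theorem~\ref{thm:nett}), and then reruns the Lemma~\ref{lem:ner} argument to rule out every candidate interpolant over $\lbrace p, q\rbrace$ --- this time against the requirement $\models_{\mathbf{X}}^{\mathbf{ss}} \phi \Rightarrow \chi$ rather than $\models_{\mathbf{X}}^{\mathbf{tt}} \chi \Rightarrow \psi$. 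You instead promote the duality that is only implicit in the paper's choice of formulas to an explicit transfer principle: $v_{\mathbf{X}^{\partial}}(\phi^{\partial}) = 1 - v_{\mathbf{X}}(\phi)$, hence $\models_{\mathbf{X}}^{\mathbf{ss}} \alpha \Rightarrow \beta$ iff $\models_{\mathbf{X}^{\partial}}^{\mathbf{tt}} \beta^{\partial} \Rightarrow \alpha^{\partial}$ and $\models_{\mathbf{X}}^{\mathbf{st}} \alpha \Rightarrow \beta$ iff $\models_{\mathbf{X}^{\partial}}^{\mathbf{st}} \beta^{\partial} \Rightarrow \alpha^{\partial}$, so the $\mathbf{ss}/\mathbf{st}$ property for $\mathbf{X}$ is equivalent to the $\mathbf{st}/\mathbf{tt}$ property for $\mathbf{X}^{\partial}$, and $\partial$-closure of $\mathcal{S}$ finishes the job via Theorem~\ref{thm:nett}. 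The directions of your two translations check out (the $\mathbf{ss}\leftrightarrow\mathbf{tt}$ one does reverse the inference; the $\mathbf{st}$ one is self-dual up to reversal), the table computation $(\mathbf{A}_{\wedge}/\mathbf{B}_{\lor})^{\partial} = \mathbf{B}_{\wedge}/\mathbf{A}_{\lor}$ is right, and the only bookkeeping point worth flagging is that $\partial$ is an involution only up to double negation of atoms, which is harmless since negation is involutive in every scheme. What your approach buys is economy and reusability: Theorem~\ref{thm:negssst} becomes a formal corollary of Theorem~\ref{thm:nett}, no new counterexamples need to be verified, and the same principle uniformly relates the $\mathbf{ss}$-column and $\mathbf{tt}$-row entries of Figure~\ref{tbl:3} as well as the two halves of Corollary~\ref{cor:posst+}. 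What the paper's version buys is self-containedness: the reader sees concrete failing inferences without having to set up and trust the dualization machinery.
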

\begin{proof}
We must prove that for every scheme $\mathbf{X}$ in $\mathcal{S}$, there are $\phi, \psi$ such that $\models_{\mathbf{X}}^{\mathbf{st}} \phi \Rightarrow \psi$, but there is no $\chi$ such that $\mathit{At}(\chi) \subseteq \mathit{At}(\phi) \cap \mathit{At}(\psi)$, $\models_{\mathbf{X}}^{\mathbf{ss}} \phi \Rightarrow \chi$ and $\models_{\mathbf{X}}^{\mathbf{st}} \chi \Rightarrow \psi$. For every scheme $\mathbf{X} \in \mathcal{S}$, we define $\psi = p \lor q$ and

\begin{itemize}
\setlength\itemsep{1em}
    \item $\phi = (q \wedge r) \lor (p \wedge \neg r)$, if $\mathbf{X} = \mathbf{RMK}_{\wedge}/\mathbf{Y}_{\lor}$ for some $\mathbf{Y}_{\lor}$,
    \item $\phi = (r \wedge q) \lor (\neg r \wedge p)$, if $\mathbf{X} = \mathbf{LMK}_{\wedge}/\mathbf{Y}_{\lor}$ for some $\mathbf{Y}_{\lor}$,
    \item $\phi = (r \lor q) \wedge (\neg r \lor p)$, if $\mathbf{X} = \mathbf{X}_{\wedge}/\mathbf{RMK}_{\lor}$ for some $\mathbf{X}_{\wedge}$,
    \item $\phi = (q \lor r) \wedge (p \lor \neg r)$, if $\mathbf{X} = \mathbf{X}_{\wedge}/\mathbf{LMK}_{\lor}$ for some $\mathbf{X}_{\wedge}$.
\end{itemize}
The remainder of the proof follows the same steps as in Theorem \ref{thm:nett}.
\end{proof}

\begin{Corollary}\label{cor:negstss}
    If a scheme $\mathbf{X}$ is in $\mathcal{S}$, then it does not satisfy the following split interpolation property: 

\vspace{7pt}
If $\mathit{At}(\phi) \cap \mathit{At}(\psi) \neq \emptyset$ and $\models_{\mathbf{X}}^{\mathbf{st}} \phi \Rightarrow \psi$, then there is a formula $\chi$ such that each of its atoms is in $\phi$ and $\psi$, and $\models_{\mathbf{X}}^{\mathbf{ss}} \phi \Rightarrow \chi$ and $\models_{\mathbf{X}}^{\mathbf{tt}} \chi \Rightarrow \psi$.
\end{Corollary}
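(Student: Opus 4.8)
My plan is to treat this as a purely formal consequence of the negative results already in hand, using the inclusions recorded in Fact~\ref{fct:subst}. The key observation is that, since $\models_{\mathbf{X}}^{\mathbf{ss}}\ \subseteq\ \models_{\mathbf{X}}^{\mathbf{st}}$ and $\models_{\mathbf{X}}^{\mathbf{tt}}\ \subseteq\ \models_{\mathbf{X}}^{\mathbf{st}}$ for every Boolean normal monotonic scheme $\mathbf{X}$, the $\mathbf{ss}/\mathbf{tt}$ split interpolation property of this corollary is \emph{stronger} than both the $\mathbf{st}/\mathbf{tt}$ property of Theorem~\ref{thm:nett} and the $\mathbf{ss}/\mathbf{st}$ property of Theorem~\ref{thm:negssst}: the antecedent $\models_{\mathbf{X}}^{\mathbf{st}}\phi\Rightarrow\psi$ and the atom-sharing condition are identical in all three formulations, and any $\chi$ witnessing the $\mathbf{ss}/\mathbf{tt}$ conclusion also witnesses the other two (replace $\models_{\mathbf{X}}^{\mathbf{ss}}\phi\Rightarrow\chi$ by the weaker $\models_{\mathbf{X}}^{\mathbf{st}}\phi\Rightarrow\chi$, or $\models_{\mathbf{X}}^{\mathbf{tt}}\chi\Rightarrow\psi$ by the weaker $\models_{\mathbf{X}}^{\mathbf{st}}\chi\Rightarrow\psi$). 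So it suffices to contrapose.

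Concretely, I would suppose for contradiction that some $\mathbf{X}\in\mathcal{S}$ satisfies the $\mathbf{ss}/\mathbf{tt}$ property, and feed it the inference $\phi\Rightarrow\psi$ constructed in the proof of Theorem~\ref{thm:nett} for that particular $\mathbf{X}$ (with $\phi=p\wedge q$ and $\psi$ chosen according to which of the four families $\mathbf{X}$ belongs to), for which $\models_{\mathbf{X}}^{\mathbf{st}}\phi\Rightarrow\psi$ and $\mathit{At}(\phi)\cap\mathit{At}(\psi)\neq\emptyset$. The assumed property then produces a $\chi$ with $\mathit{At}(\chi)\subseteq\{p,q\}$ and $\models_{\mathbf{X}}^{\mathbf{tt}}\chi\Rightarrow\psi$. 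But the proof of Theorem~\ref{thm:nett} already shows — using Lemma~\ref{lem:ner} and the two valuations that disagree on which of $p,q$ receives the value $\sfrac{1}{2}$ — that no formula over $\{p,q\}$ can satisfy $\models_{\mathbf{X}}^{\mathbf{tt}}\chi\Rightarrow\psi$ at all. This contradiction establishes the corollary. (Equivalently, one could route through Theorem~\ref{thm:negssst} instead, using $\models_{\mathbf{X}}^{\mathbf{tt}}\ \subseteq\ \models_{\mathbf{X}}^{\mathbf{st}}$ to turn the hypothetical $\mathbf{ss}/\mathbf{tt}$ interpolant into an $\mathbf{ss}/\mathbf{st}$ one.)

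There is essentially no technical obstacle here; this is a one-line corollary. The only point that needs care is the direction of the inclusions in Fact~\ref{fct:subst} — namely that $\mathbf{ss}$-validity and $\mathbf{tt}$-validity each \emph{imply} $\mathbf{st}$-validity, not conversely — so that imposing $\mathbf{ss}$ on the left component and $\mathbf{tt}$ on the right component only tightens, never loosens, the demand on the interpolant. Once that is in place, the failure propagates automatically from Theorem~\ref{thm:nett} (or Theorem~\ref{thm:negssst}), and no new counterexample or combinatorial argument is required.
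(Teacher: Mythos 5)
Your proposal is correct and is essentially the paper's argument: the corollary is obtained as an immediate consequence of the preceding negative theorems via the inclusions $\models_{\mathbf{X}}^{\mathbf{ss}},\models_{\mathbf{X}}^{\mathbf{tt}}\subseteq\models_{\mathbf{X}}^{\mathbf{st}}$ from Fact~\ref{fct:subst}, with no new counterexample needed. The paper happens to route through Theorem~\ref{thm:negssst} while you primarily route through Theorem~\ref{thm:nett}, but you note the other route as well, and both are the same one-line observation that the $\mathbf{ss}/\mathbf{tt}$ property is the strongest of the three.
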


\begin{proof}
    By Fact \ref{fct:subst}, for any $\mathbf{X}$, $\models_{\mathbf{X}}^{\mathbf{tt}} \ \subseteq \ \models_{\mathbf{X}}^{\mathbf{st}}$, so any counterexample to the $\mathbf{st}$-validity of an inference is also a counterexample to the $\mathbf{tt}$-validity of an inference.
\end{proof}

\begin{figure}[ht]
\begin{center}
\begin{displaymath}
\begin{tabular}{| c | c | c | c |  c | c |}
\hline
 \diagbox{$\mathcal{C}$}{$\mathcal{C'}$} & $\mathbf{ss}$ & $\mathbf{tt}$ & $\mathbf{st}$ & $\mathbf{ts}$ & $\mathbf{ss} \cap \mathbf{tt}$\\
\hline
$\mathbf{ss}$ & \text{\xmark} & \text{\cmark}/\text{\xmark}  & \text{\cmark}/\text{\xmark} & \text{\xmark} & \text{\xmark}\\
 \hline
$\mathbf{tt}$ & \text{\xmark} & \text{\xmark} & \text{\xmark} & \text{\xmark} & \text{\xmark}\\
 \hline
$\mathbf{st}$ & \text{\xmark} & \text{\cmark}/\text{\xmark} & \text{\cmark} & \text{\xmark} & \text{\xmark}\\
 \hline
$\mathbf{ts}$ & \text{\xmark} & \text{\xmark} & \text{\xmark} & \text{\xmark} & \text{\xmark}\\
 \hline
$\mathbf{ss} \cap \mathbf{tt}$ & \text{\xmark} & \text{\xmark} & \text{\xmark} & \text{\xmark} & \text{\xmark}\\
\hline
\end{tabular}
\end{displaymath}
\caption{Failure/success of the property}\label{tbl:3}
\end{center}
\end{figure}

By fixing the combination of an $\mathbf{ss}$-consequence relation with a $\mathbf{tt}$-consequence relation, and incorporating these negative results with the earlier positive ones, we obtain the outcomes shown in Figure \ref{tbl:2}. In light of Corollary \ref{cor:posst+}, along with theorems \ref{thm:nett} and \ref{thm:negssst}, this table also extends to the $\mathbf{st}/\mathbf{tt}$ and $\mathbf{ss}/\mathbf{st}$ combinations. This now allows us to complete Figure \ref{tbl:1}, resulting in the formation of Figure \ref{tbl:3}. The combination of symbols $\text{\cmark}/\text{\xmark}$ indicates that, for the corresponding pair of standards, the property fails or holds depending on the scheme selected, and directs the reader to refer to Figure \ref{tbl:2}.

\section{Discussion}\label{sec:dis}
There are 400 possible combinations of three-valued monotonic logics based on the same Boolean normal monotonic scheme, yet, as we have seen, only 40 of them satisfy the mixed interpolation property. Among these 40 schemes, some appear more refined than others. For example, the version of the split interpolation based on a pair of consequence relations $\models_{\mathbf{WK}_\wedge/\mathbf{WK}_\lor}^{\mathbf{ss}}$ and $\models_{\mathbf{WK}_\wedge/\mathbf{WK}_\lor}^{\mathbf{tt}}$ entails the split interpolation based on the pair of consequence relations $\models_{\mathbf{WK}_\wedge/\mathbf{WK}_\lor}^{\mathbf{st}}$ and $\models_{\mathbf{WK}_\wedge/\mathbf{WK}_\lor}^{\mathbf{tt}}$.
By Fact \ref{fct:subst}, $\models_{\mathbf{X}}^{\mathbf{ss}} \subseteq \models_{\mathbf{X}}^{\mathbf{st}}$, so the existence of an interpolant in the first case guarantees the existence of one in the second.  In general, a refinement will be said to be stronger than another if the former entails the latter. Any split interpolation result involving a consequence relation $\models_{\mathbf{X}}^{\mathbf{st}}$ can be reduced to another involving a consequence relation $\models_{\mathbf{X}}^{\mathbf{ss}}$ and a consequence relation $\models_{\mathbf{X}}^{\mathbf{tt}}$, depending on the cases. Consequently, any split interpolation property involving a pair of logics based on an $\mathbf{ss}$- and a $\mathbf{tt}$-consequence relation, respectively, is more refined than any other. In such a case, the choice of scheme for these two consequence relations is crucial in determining whether the property holds or not. As we have seen, the split interpolation property holds for logics with standards $\mathbf{ss}$ and $\mathbf{tt}$ only when the scheme includes either a Strong Kleene conjunction or a Strong Kleene disjunction, or both a Weak Kleene conjunction and a Weak Kleene disjunction. We found that the property holds in eight cases and fails in the remaining eight. None of the cases where the property holds can be reduced to another, as the underlying consequence relations are not subsets of one another. They therefore constitute our eight refinements of Craig's interpolation theorem.

In the formulation of the split interpolation property, one effect of using a paracomplete logic on the left---such as any logic defined by a $\mathbf{ss}$-relation with a Boolean normal monotonic scheme---and a paraconsistent logic on the right---such as any logic with a $\mathbf{tt}$-relation similarly combined---is a reduction in the possible options for the interpolant. Excluding trivial cases, choosing an interpolant that is either a contradiction or a tautology will systematically fail, even if it is entailed by the premises or entails the conclusion.\footnote{Given a classically valid inference $\phi \Rightarrow \psi$, the assumption of the interpolation property requires that $\mathit{At}(\phi) \cap \mathit{At}(\psi) \neq \emptyset$. This implies that $\phi$ is not a contradiction and $\psi$ is not a tautology---except in trivial cases, such as when $\phi = p \wedge \neg p$ and $\psi = p$, where $p$ follows from $p \wedge \neg p$ by conjunction elimination.} In the case of the split interpolation considered here, this limitation is directly embedded in the logics used: the paracomplete logic prevents the interpolant from being a tautology, while the paraconsistent logic prevents it from being a contradiction. This restriction further extends to interpolants containing contradictions or tautologies as subformulas. For instance, the inference from $p \lor (q \wedge \neg q)$ to $p \lor q$ is classically valid, and both $p \wedge (q \lor \neg q)$ and $p$ are suitable interpolants in the classical sense of Craig's interpolation. However, $p \wedge (q \lor \neg q)$ is not a suitable interpolant for the split interpolation combining an $\mathbf{ss}$-consequence relation and a $\mathbf{tt}$-consequence relation based on the Strong Kleene scheme---unlike $p$---since $\not\models_{\mathbf{SK}_{\wedge}/\mathbf{SK}_{\lor}}^{\mathbf{ss}} p \lor (q \wedge \neg q) \Rightarrow p \wedge (q \lor \neg q)$. Having an interpolant with a single variable is preferable, as it simplifies the logical structure and reduces complexity compared to an interpolant involving multiple variables.

Can these results be refined further? One approach could be to consider logics from which those described in this paper are proper extensions, meaning their sets of validities are proper subsets of the validities of the logics analyzed here. In the case of logics based on the Strong Kleene scheme, \cite{Prenosil2017} demonstrates that the split interpolation property holds for the pair of logics $\mathsf{ETL}$ and $\mathsf{LP}$. $\mathsf{ETL}$, a four-valued paracomplete logic explored in \cite{PietzRivieccio2013}, is extended by $\mathsf{K}_3$. The split interpolation involving $\mathsf{LP}$ and $\mathsf{ETL}$ is therefore more refined than Milne's interpolation. However, the refinement could potentially be taken even further. Just as $\mathsf{K}_3$ is known to be the dual of $\mathsf{LP}$ (see \citealp{blomet2024sttsproductsum}), $\mathsf{ETL}$ is the dual of $\mathsf{NFL}$, a four-valued paraconsistent logic extended by $\mathsf{LP}$ \citep{Shramko_2019}. This raises the question: does the split interpolation hold for the pair $\mathsf{ETL}$ and $\mathsf{NFL}$? Furthermore, both logics are based on a scheme that extends the Strong Kleene scheme, meaning that any trivaluation on this scheme is also a trivaluation on the Strong Kleene scheme. On that account, is it possible to extend other Boolean normal monotonic schemes to derive, for example, the Weak Kleene counterparts of $\mathsf{ETL}$ and $\mathsf{NFL}$, and if so, do they have the split interpolation property? We would arrive at results fully analogous to those presented here, with an even more refined interpolation theorem. We leave the exploration of these questions for future work.

\section{Conclusion}
We have presented results that strengthen Craig's deductive interpolation theorem for classical propositional logic. Our findings build on the refinements previously established by \cite{milne2016refinement} and \cite{Prenosil2017}, which were limited to the pair $\mathsf{K}_3$ and $\mathsf{LP}$. Out of the 400 possible combinations of two three-valued monotonic logics based on the same Boolean normal monotonic scheme, 40 satisfy the mixed interpolation property, with eight representing stronger refinements of Craig’s interpolation theorem.

Our proof was developed systematically. We began by defining the key concepts of monotonicity and Boolean normality, followed by addressing the main question for the pair of logics where the interpolation property fails, regardless of the specific connective scheme. We then explored the results that depend on the choice of scheme, and concluded by discussing the broader implications of our findings. Finally, we pointed to potential directions for future research, opening the door to further refinements of the split interpolation results.

\section*{Acknowledgements}
{\small We thank Eduardo Barrio, Agustina Borzi, Paul {\'Egr\'e}, Bruno Da R\'e, Camillo Fiore, Camila Gallovich, Federico Pailos, Dave Ripley, Mariela Rubin, Damian Szmuc, and Allard Tamminga for helpful exchanges related to the topic of this article. We acknowledge the audience of the XII Workshop on Philosophical Logic, held in Buenos Aires in August 2023; the audience of PALLMYR XIII, held in London in November 2023; and the audience of the Workshop on Theory and Applications of Craig Interpolation and Beth Definability, held in Amsterdam in April 2024.}

\section*{Funding}
{\small This project has received support from the program ECOS-SUD (``Logical consequence and many-valued models'', no.\ A22H01), a bilateral exchange program between Argentina and France, and from PLEXUS (``Philosophical, Logical and Experimental Perspectives on Substructural Logics'', grant agreement no. 101086295), a Marie Sk\l odowska-Curie action funded by the EU under the Horizon Europe Research and Innovation Program.}

\bibliographystyle{apacite}
\bibliography{references}

\end{document}